\newtheorem{theorem}{Theorem}[section]
\newtheorem{prop}[theorem]{Proposition}
\newtheorem{lemma}[theorem]{Lemma}
\newtheorem{prop-def}[theorem]{Proposition-Definition}
\newtheorem{coro-def}[theorem]{Corollary-Definition}
\newtheorem{algo}[theorem]{Algorithm}
\newtheorem{question}[theorem]{Question}
\theoremstyle{definition}
\newtheorem{defn}[theorem]{Definition}
\newtheorem{remark}[theorem]{Remark}
\newtheorem{exam}[theorem]{Example}
\newcommand{\nc}{\newcommand}
\nc{\tred}[1]{\textcolor{red}{#1}}
\nc{\tblue}[1]{\textcolor{blue}{#1}}
\nc{\tgreen}[1]{\textcolor{green}{#1}}
\nc{\tpurple}[1]{\textcolor{purple}{#1}}
\nc{\btred}[1]{\textcolor{red}{\bf #1}}
\nc{\btblue}[1]{\textcolor{blue}{\bf #1}}
\nc{\btgreen}[1]{\textcolor{green}{\bf #1}}
\nc{\btpurple}[1]{\textcolor{purple}{\bf #1}}
\renewcommand{\frak}{\mathfrak}
\newcommand{\efootnote}[1]{}
\renewcommand{\textbf}[1]{}
\newcommand{\delete}[1]{}
\nc{\mlabel}[1]{\label{#1}}  
\nc{\mcite}[1]{\cite{#1}}  
\nc{\mref}[1]{\ref{#1}}  
\nc{\mbibitem}[1]{\bibitem{#1}} 
\nc{\mlabel}[1]{\label{#1}  
{\hfill \hspace{1cm}{\small\tt{{\ }\hfill(#1)}}}}
\nc{\mcite}[1]{\cite{#1}{\small{\tt{{\ }(#1)}}}}  
\nc{\mref}[1]{\ref{#1}{{\tt{{\ }(#1)}}}}  
\nc{\mbibitem}[1]{\bibitem[\bf #1]{#1}} 
\nc{\opa}{\ast} \nc{\opb}{\odot} \nc{\op}{\bullet} \nc{\pa}{\frakL}
\nc{\arr}{\rightarrow} \nc{\lu}[1]{(#1)} \nc{\mult}{\mrm{mult}}
\nc{\diff}{\mathfrak{Diff}}
\nc{\opc}{\sharp}\nc{\opd}{\natural}
\nc{\ope}{\circ}
\nc{\dpt}{\mathrm{d}}
\nc{\tforall}{\ \text{   for all }}
\nc{\AW}{\mathcal{A}}
\nc{\ari}{\mathrm{ar}}
\nc{\lef}{\mathrm{lef}}
\nc{\Sh}{\mathrm{ST}}
\nc{\Cr}{\mathrm{Cr}}
\nc{\st}{{Schr\"oder tree}\xspace}
\nc{\sts}{{Schr\"oder trees}\xspace}
\nc{\vertset}{\Omega} 
\nc{\dep}{\mrm{dep}}
\nc{\assop}{\quad \begin{picture}(5,5)(0,0)
\line(-1,1){10}
\put(-2.2,-2.2){$\bullet$}
\line(0,-1){10}\line(1,1){10}
\end{picture} \quad \smallskip}
\nc{\operator}{\begin{picture}(5,5)(0,0)
\line(0,-1){6}
\put(-2.6,-1.8){$\bullet$}
\line(0,1){9}
\end{picture}}
\nc{\idx}{\begin{picture}(6,6)(-3,-3)
\put(0,0){\line(0,1){6}}
\put(0,0){\line(0,-1){6}}
 \end{picture}}
\nc{\pb}{{\mathrm{pb}}}
\nc{\Lf}{{\mathrm{Lf}}}
\nc{\lft}{{left tree}\xspace}
\nc{\lfts}{{left trees}\xspace}
\nc{\fat}{{fundamental averaging tree}\xspace}
\nc{\fats}{{fundamental averaging trees}\xspace}
\nc{\avt}{\mathrm{Avt}}
\nc{\rass}{{\mathit{RAss}}}
\nc{\aass}{{\mathit{AAss}}}
\nc{\vin}{{\mathrm Vin}}    
\nc{\lin}{{\mathrm Lin}}    
\nc{\inv}{\mathrm{I}n}
\nc{\gensp}{V} 
\nc{\genbas}{\mathcal{V}} 
\nc{\bvp}{V_P}     
\nc{\gop}{{\,\omega\,}}     
\nc{\bin}[2]{ (_{\stackrel{\scs{#1}}{\scs{#2}}})}  
\nc{\binc}[2]{ \left (\!\! \begin{array}{c} \scs{#1}\\
    \scs{#2} \end{array}\!\! \right )}  
\nc{\bincc}[2]{  \left ( {\scs{#1} \atop
    \vspace{-1cm}\scs{#2}} \right )}  
\nc{\bs}{\bar{S}} \nc{\cosum}{\sqsubset} \nc{\la}{\longrightarrow}
\nc{\rar}{\rightarrow} \nc{\dar}{\downarrow} \nc{\dprod}{**}
\nc{\dap}[1]{\downarrow \rlap{$\scriptstyle{#1}$}}
\nc{\md}{\mathrm{dth}} \nc{\uap}[1]{\uparrow
\rlap{$\scriptstyle{#1}$}} \nc{\defeq}{\stackrel{\rm def}{=}}
\nc{\disp}[1]{\displaystyle{#1}} \nc{\dotcup}{\
\displaystyle{\bigcup^\bullet}\ } \nc{\gzeta}{\bar{\zeta}}
\nc{\hcm}{\ \hat{,}\ } \nc{\hts}{\hat{\otimes}}
\nc{\barot}{{\otimes}} \nc{\free}[1]{\bar{#1}}
\nc{\uni}[1]{\tilde{#1}} \nc{\hcirc}{\hat{\circ}} \nc{\lleft}{[}
\nc{\lright}{]} \nc{\lc}{\lfloor} \nc{\rc}{\rfloor}
\nc{\curlyl}{\left \{ \begin{array}{c} {} \\ {} \end{array}
    \right .  \!\!\!\!\!\!\!}
\nc{\curlyr}{ \!\!\!\!\!\!\!
    \left . \begin{array}{c} {} \\ {} \end{array}
    \right \} }
\nc{\longmid}{\left | \begin{array}{c} {} \\ {} \end{array}
    \right . \!\!\!\!\!\!\!}
\nc{\onetree}{\bullet} \nc{\ora}[1]{\stackrel{#1}{\rar}}
\nc{\ola}[1]{\stackrel{#1}{\la}}
\nc{\ot}{\otimes} \nc{\mot}{{{\boxtimes\,}}}
\nc{\otm}{\overline{\boxtimes}} \nc{\sprod}{\bullet}
\nc{\scs}[1]{\scriptstyle{#1}} \nc{\mrm}[1]{{\rm #1}}
\nc{\margin}[1]{\marginpar{\rm #1}}   
\nc{\dirlim}{\displaystyle{\lim_{\longrightarrow}}\,}
\nc{\invlim}{\displaystyle{\lim_{\longleftarrow}}\,}
\nc{\mvp}{\vspace{0.3cm}} \nc{\tk}{^{(k)}} \nc{\tp}{^\prime}
\nc{\ttp}{^{\prime\prime}} \nc{\svp}{\vspace{2cm}}
\nc{\vp}{\vspace{8cm}} \nc{\proofbegin}{\noindent{\bf Proof: }}
\nc{\proofend}{$\blacksquare$ \vspace{0.3cm}}
\nc{\modg}[1]{\!<\!\!{#1}\!\!>}
\nc{\intg}[1]{F_C(#1)} \nc{\lmodg}{\!
<\!\!} \nc{\rmodg}{\!\!>\!}
\nc{\cpi}{\widehat{\Pi}}
\nc{\sha}{{\mbox{\cyr X}}}  
\nc{\shap}{{\mbox{\cyrs X}}} 
\nc{\shpr}{\diamond}    
\nc{\shp}{\ast} \nc{\shplus}{\shpr^+}
\nc{\shprc}{\shpr_c}    
\nc{\msh}{\ast} \nc{\zprod}{m_0} \nc{\oprod}{m_1}
\nc{\vep}{\varepsilon} \nc{\labs}{\mid\!} \nc{\rabs}{\!\mid}
\nc{\mmbox}[1]{\mbox{\ #1\ }} \nc{\fp}{\mrm{FP}}
\nc{\rchar}{\mrm{char}} \nc{\End}{\mrm{End}} \nc{\Fil}{\mrm{Fil}}
\nc{\Mor}{Mor\xspace} \nc{\gmzvs}{gMZV\xspace}
\nc{\gmzv}{gMZV\xspace} \nc{\mzv}{MZV\xspace}
\nc{\mzvs}{MZVs\xspace} \nc{\Hom}{\mrm{Hom}} \nc{\id}{\mrm{id}}
\nc{\im}{\mrm{im}} \nc{\incl}{\mrm{incl}} \nc{\map}{\mrm{Map}}
\nc{\mchar}{\rm char} \nc{\nz}{\rm NZ} \nc{\supp}{\mathrm Supp}
 \nc{\bre}{\mrm{b}}
\nc{\Alg}{\mathbf{Alg}} \nc{\Bax}{\mathbf{Bax}} \nc{\bff}{\mathbf f}
\nc{\bfk}{{\mathbf k}} \nc{\bfone}{{\bf 1}} \nc{\bfx}{\mathbf x}
\nc{\bfy}{\mathbf y}
\nc{\base}[1]{\bfone^{\otimes ({#1}+1)}} 
\nc{\Cat}{\mathbf{Cat}}
\nc{\detail}{\marginpar{\bf More detail}
    \noindent{\bf Need more detail!}
    \svp}
\nc{\Int}{\mathbf{Int}} \nc{\Mon}{\mathbf{Mon}}
\nc{\rbtm}{{shuffle }} \nc{\rbto}{{Rota-Baxter }}
\nc{\remarks}{\noindent{\bf Remarks: }} \nc{\Rings}{\mathbf{Rings}}
\nc{\Sets}{\mathbf{Sets}} \nc{\wtot}{\widetilde{\odot}}
\nc{\wast}{\widetilde{\ast}} \nc{\bodot}{\bar{\odot}}
\nc{\bast}{\bar{\ast}} \nc{\hodot}[1]{\odot^{#1}}
\nc{\hast}[1]{\ast^{#1}} \nc{\mal}{\mathcal{O}}
\nc{\tet}{\tilde{\ast}} \nc{\teot}{\tilde{\odot}}
\nc{\oex}{\overline{x}} \nc{\oey}{\overline{y}}
\nc{\oez}{\overline{z}} \nc{\oef}{\overline{f}}
\nc{\oea}{\overline{a}} \nc{\oeb}{\overline{b}}
\nc{\weast}[1]{\widetilde{\ast}^{#1}}
\nc{\weodot}[1]{\widetilde{\odot}^{#1}} \nc{\hstar}[1]{\star^{#1}}
\nc{\lae}{\langle} \nc{\rae}{\rangle}
\nc{\lf}{\lfloor}\nc{\rf}{\rfloor}
\nc{\QQ}{{\mathbb Q}}\nc{\NN}{{\mathbb N}}
\nc{\RR}{{\mathbb R}} \nc{\ZZ}{{\mathbb Z}}
\nc{\cala}{{\mathcal A}} \nc{\calb}{{\mathcal B}}
\nc{\calc}{{\mathcal C}}
\nc{\cald}{{\mathcal D}} \nc{\cale}{{\mathcal E}}
\nc{\calf}{{\mathcal F}} \nc{\calg}{{\mathcal G}}
\nc{\calh}{{\mathcal H}} \nc{\cali}{{\mathcal I}}
\nc{\call}{{\mathcal L}} \nc{\calm}{{\mathcal M}}
\nc{\caln}{{\mathcal N}} \nc{\calo}{{\mathcal O}}
\nc{\calp}{{\mathcal P}} \nc{\calr}{{\mathcal R}}
\nc{\cals}{{\mathcal S}} \nc{\calt}{{\mathcal T}}
\nc{\calu}{{\mathcal U}} \nc{\calw}{{\mathcal W}} \nc{\calk}{{\mathcal K}}
\nc{\calx}{{\mathcal X}} \nc{\CA}{\mathcal{A}}
\nc{\fraka}{{\mathfrak a}} \nc{\frakA}{{\mathfrak A}}
\nc{\frakb}{{\mathfrak b}} \nc{\frakB}{{\mathfrak B}}
\nc{\frakD}{{\mathfrak D}} \nc{\frakg}{{\mathfrak g}}
\nc{\frakH}{{\mathfrak H}} \nc{\frakL}{{\mathfrak L}}
\nc{\frakM}{{\mathfrak M}} \nc{\bfrakM}{\overline{\frakM}}
\nc{\frakm}{{\mathfrak m}} \nc{\frakP}{{\mathfrak P}}
\nc{\frakN}{{\mathfrak N}} \nc{\frakp}{{\mathfrak p}}
\nc{\frakR}{{\mathfrak R}} \nc{\frakS}{{\mathfrak S}}
\nc{\BS}{\mathbb{S}}
\font\cyr=wncyr10 \font\cyrs=wncyr7
\nc{\li}[1]{\textcolor{red}{#1}}
\nc{\lir}[1]{\textcolor{red}{Li:#1}}
\nc{\tj}[1]{\textcolor{blue}{Tianjie: #1}}
\nc{\red}[1]{\textcolor{red}{#1}}
\nc{\blue}[1]{\textcolor{blue}{#1}}
\nc{\green}[1]{\textcolor{green}{#1}}
\nc{\yellow}[1]{\textcolor{yellow}{#1}}
\nc{\purple}[1]{\textcolor{purple}{#1}}
\begin{document}

\title[Reynolds algebras and their free objects]{Reynolds algebras and their free objects from bracketed words and rooted trees}
%
\author{Tianjie Zhang}
\address{School of Mathematics and Statistics, Ningxia University, Yinchuan, Ningxia 750021, China}
\email{tjzhangmath@aliyun.com}

\author{Xing Gao}
\address{Department of Mathematics, Lanzhou University, Lanzhou, Gansu 730000, China}
\email{gaoxing@lzu.edu.cn}

\author{Li Guo}
\address{Department of Mathematics and Computer Science, Rutgers University, Newark, NJ 07102}
\email{liguo@rutgers.edu}
\date{\today}
\begin{abstract}
The study of Reynolds algebras has its origin in the well-known work of O. Reynolds on fluid dynamics in 1895 and has since found broad applications. It also has close relationship with important linear operators such as algebra endomorphisms, derivations and Rota-Baxter operators. Many years ago G.~Birkhoff suggested an algebraic study of Reynolds operators, including the corresponding free algebras. We carry out such a study in this paper. We first provide examples and properties of Reynolds operators, including a multi-variant generalization of the Reynolds identity. We then construct the free Reynolds algebra on a set. For this purpose, we identify a set of bracketed words called Reynolds words which serves as the linear basis of the free Reynolds algebra. A combinatorial interpretation of Reynolds words is given in terms of rooted trees without super crowns. The closure of the Reynolds words under concatenation gives the algebra structure on the space spanned by Reynolds words. Then a linear operator is defined on this algebra such that the Reynolds identity and the desired universal property are satisfied.
\end{abstract}

\subjclass[2010]{
16W99, 
17A36   
16S10 
05C05, 
76D99   
}

\keywords{Reynolds operator, Reynolds algebra, averaging operator, free object, bracketed word, rooted trees}

\maketitle

\tableofcontents

\vspace{-1.5cm}

\allowdisplaybreaks

\section{Introduction}
This paper starts a systematic algebraic study of Reynolds algebras, focusing on the construction of their free objects by bracketed words and rooted trees. In doing do, we solved a problem posted by G. Birkhoff in 1961~\cite{Bi}.

\subsection{History of Reynolds operators}

In his celebrated study~\cite{Re} of fluctuation theory in fluid dynamics in 1895, O. Reynolds was led to a linear operator $P$ on an algebra $R$ of functions satisfying the operator identity
\begin{equation}
P(uv) = P(u)P(v)+ P((u-P(u))( v-P(v) )) \tforall u, v \in R,
\mlabel{eq:rey}
\end{equation}
which is also known as the approximation relation.
In its early applications in fluid dynamics, in particular in the Reynolds-averaged Navier-Stokes equations modeling turbulent flows, a Reynolds operator played the role of taking average over a time interval.
The term {\bf Reynolds operator} was coined by J. Kamp\'{e} de F\'{e}riet who pursued the study of the operator as a mathematical subject in general for an extended period of time~\cite{Ka1,Ka2}. More commonly used equivalent form of Eq.~\eqref{eq:rey} is the {\bf Reynolds identity}
\begin{equation}
P(u)P(v) = P(uP(v))+P(P(u)v)-P(P(u) P(v)) \tforall u, v\in R,
\mlabel{eq:rey2}
\end{equation}
which was derived by M.-L. Dubreil-Jacotin~\mcite{DJ}.

An operator closely related to the Reynolds operator is the {\bf averaging operator}, defined to be a linear operator $P$ on a $\bfk$-algebra $R$ that satisfies the {\bf averaging identity}
\begin{equation}
P(u P(v)) = P(P(u)v) = P(u)P(v) \tforall u, v\in R.
\mlabel{eq:ave}
\end{equation}
The operator was first studied by G. Birkhoff and J. Kamp\'{e} de F\'{e}riet. It is well known that an idempotent averaging operator is a Reynolds operator.
There is a quite large literature on averaging operators related to algebra, analysis, combinatorics, geometry and operads~\cite{Bo,Br,Cao,GuK,Ki,PBGN,PG,Ro1,Tr}.

As realized many years later, the study of Reynolds and averaging operators in the 1930s was closely related to the probability theory that Kolmogorov was developing at the same time, in particular to conditional expectations.
Large thanks to the contributions of G.~Birkhoff, J.~B.~Miller and G.-C.~Rota and their coauthors, there was an extensive literature on Reynolds operators in the 1950s and 1960s, for example on $f$-algebras, algebras of finitely valued functions, operators on function spaces and generalized connection with conditional expectation~\cite{FM,Mi1,Mi2}.
See the survey articles of G.~Birkhoff and G.-C.~Rota~\cite{Bi,Ro2}.

Recently, Reynolds operators were defined for rational $G$-modules where $G$ is a linearly reductive group, and dual functors of $G$-modules~\cite{AS,CW,CHK}, motivated by geometric invariant theory~\cite{MFK} and lattice ordered algebras~\mcite{BBT}.

\subsection{Motivation of our approach}

According to the early practitioners of Reynolds operators such as Reynolds, Rota and Miller~\mcite{Mi2,Re,Ro2}, the Reynolds operator first arose because of the difficulty of producing averaging operators that commuted with differentiation in Euclidean spaces; it can be regarded as an approximation of the algebra endomorphism, a formal translated inverse of the derivation and an infinitesimal infinitesimal analog of the Rota-Baxter operator. See Section~\mref{ss:defex} for some details.
Another connection that is especially pertinent to us is that Reynolds operators and their generalizations provides a suitable abstraction for the Volterra integral operators, as shown in Examples~\mref{ex:ie1} and~\mref{ex:ie2}~\mcite{GGL,Ze}.

Thus with its long history, broad applications and intimate connections with classical linear operators, it is important to give a systematic study of the Reynolds operator in the algebraic context. In fact, in his classical survey~\mcite{Bi}, R. Birkhoff suggested the following problem, which we will refer as {\bf Birkhoff's Question}.

\begin{question} $($\cite{Bi}$)$
It would be interesting to study the algebraic implications of this condition~$[$referring to Eq.~\eqref{eq:rey2}$]$ $($including the ``free algebra" of linear operators satisfying Eq.~\eqref{eq:rey2}$)$.
\label{qu:birk}
\end{question}
Such an algebraic study of Reynolds operators, especially the resolution of Birkhoff's Question on free Reynolds algebras, is what the current paper intends to carry out.

Combinatorial properties of an algebraic structure are often reflected by its free objects. This is the case for the well-known constructions of free semigroups and free (commutative and noncommutative) algebras as words and (commutative and noncommutative) polynomials respectively. Less well-known is the case of free Lie algebras~\mcite{Reu}. For algebras with linear operators, free Rota-Baxter algebras~\mcite{EG,Gub,GS}, free averaging algebras~\mcite{PG} and free operated algebras~\mcite{Gop,Gub} have their combinatorial interpretations as quasi-symmetric functions, rooted trees and Motzkin paths, and have their generating functions related to Catalan numbers and Schr\"oder numbers.

For Reynolds algebras, R.~Birkhoff already noticed the significance of their free objects in his Question~\ref{qu:birk}.
Even though the existence of free Reynolds algebra comes from general results of universal algebra to which R.~Birkhoff is a main contributor, their explicit construction is still not known. This is in sharp contrast with not only the classical algebraic structures aforementioned, but also the other well-known algebraic structures equipped with linear operators, such differential algebras~\mcite{Kol}, Rota-Baxter algebras~\mcite{Ca,GK,Ro} and the closely related averaging algebras~\mcite{PG}.
In particular, free differential algebras were obtained by Ritt and Kolchin in the first half of the last century~\mcite{Kol}; free commutative Rota-Baxter algebras were first constructed by Rota and then Cartier around 1970~\mcite{Ca,Ro}.

To understand the challenges in constructing free Reynolds algebras and the motivation of our study, let us compare the Reynolds algebra with these other algebras.

An algebra $R$ together with a linear operator $P:R\longrightarrow R$ is called
\begin{enumerate}
\item a {\bf differential algebra} if $P$ satisfies the Leibniz role
\begin{equation}
P(uv)=P(u)v+uP(v) \tforall u, v\in R;
\mlabel{eq:diff}
\end{equation}
\item
a {\bf Rota-Baxter algebra} of weight $\lambda$, where $\lambda$ is a fixed scalar, if $P$ satisfies the generalized integration by parts
\begin{equation}
P(u)P(v)=P(uP(v))+P(P(u)v)+\lambda P(uv) \tforall u, v\in R;
\mlabel{eq:rb}
\end{equation}
\item
an {\bf averaging algebra} if $P$ satisfies Eq.~(\mref{eq:ave}).
\end{enumerate}
There is a more general class of Rota-Baxter type algebras which also include the Nijenhuis algebras and Leroux's TD algebras~\mcite{ZGGS}.

The idea behind the constructions of free objects in these algebraic structures is the fact that the relation defining each of the structures gives rise to a convergent rewriting system that reduces the left hand side to the right hand side~\mcite{LG}. Despite its close similarity with the Rota-Baxter operator, this reduction does not work for Reynolds algebras, since the intended leading term $P(u)P(v)$ on the left hand side of Eq.~(\ref{eq:rey2}) also appears on the right hand side, resulting in a divergent rewriting system unless some completeness conditions are imposed (see Proposition~\mref{pp:series}). Our first strategy is to extract a different rewriting rule from the Reynolds identity in Eq.~(\mref{eq:rey2}), giving rise to a terminating rewriting system. Even with that issue resolved, it needs careful analysis of the reduction, including multi-variant generalizations of the Reynolds identity, in order to arrive at a set of reduced words in the set of bracketed words, and to verify that the space spanned by these reduced words has a Reynolds algebra structure and satisfies the desired universal property.

\subsection{Outline of the paper}

Pursuing this approach, the layout of the paper is as follows. In Section~\mref{sec:prel}, we first provide examples of Reynolds operators from integration and derivation, followed by some basic properties of Reynolds operators, including an interpretation (Proposition~\mref{pp:series}) of Rota's remark that the Reynolds operator should be an infinitesimal analog of the Rota-Baxter operator. We then give a multi-variant generalization (Proposition~\mref{prop:R}) of the Reynolds identity in Eq.~\eqref{eq:rey2}, which other than being interesting on its own, plays an essential role on the construction of the free Reynolds algebras in the next section.

The purpose of Section~\mref{sec:free} is to construct free Reynolds algebras as suggested in Reynolds's Question~\ref{qu:birk}. We first recall the construction of free operated algebras on a set via bracketed words with the set as the alphabet. This free operated algebra serves as the space from which to derive a free Reynolds algebra by taking the quotient modulo the operated ideal generated by the Reynolds identity. In order to give an explicitly defined basis of this quotient, we identify a canonical subset of the bracketed words, called the Reynolds words, as a complete set of representatives. The combinatorial meaning of Reynolds words in terms of trees is illustrated. The rest of the section is devoted to the verification that the space spanned by Reynolds words has the algebraic structure and the universal property of a free Reynolds algebra, culminated in the main result of the paper, Theorem~\mref{thm:main}.

\smallskip

{\bf Convention. } Throughout this paper, let $\bfk$ be a unitary commutative ring which will be the base ring of all  modules, algebras, as well as linear maps. Unless otherwise specified, an algebra is a unitary associative $\bfk$-algebra. Denote by $M(X)$ (resp. $S(X)$) the free monoid (resp. semigroup) generated by $X$. For any set $Y$, denote by $\bfk Y$ the free $\bfk$-module with basis $Y$.

\section{Properties and examples}
\mlabel{sec:prel}

In this section, we give first give examples of Reynolds operators with motivations from integration and differentiation. We then give a weighted generalization and properties of Reynolds algebras, including their replicating property and role as infinitesimal or deformation of Rota-Baxter algebras. We also show that the Reynolds identity~\eqref{eq:rey2} has natural multi-variant generalizations.

\subsection{Definitions and examples}
\mlabel{ss:defex}
From now on, we refer Eq.~\eqref{eq:rey2} as the Reynolds identity. Basic concepts for algebras and Rota-Baxter algebras~\mcite{Gub} can be similarly defined for Reynolds algebras.
\begin{defn}
A {\bf Reynolds subalgebra} (resp. {\bf Reynolds ideal}) of a Reynolds algebra $(R,P)$ is a subalgebra (resp. an ideal ) $I$ of $R$ such that $P(I)\subseteq I$.
A {\bf Reynolds algebra homomorphism} $f:(R_{1},P_{1})\rightarrow(R_{2},P_{2})$ between two Reynolds algebras $(R_{1},P_{1})$ and $(R_{2},P_{2})$ is an algebra homomorphism such that $fP_{1}=P_{2}f$.
\end{defn}

Let $\calr\cala$ denote the category of Reynolds algebras with Reynolds algebra homomorphisms. In Section~\mref{sec:free}, we will construct the free objects of $\calr\cala$ by using the Reynolds words. We first give some general properties of Reynolds algebras in this section.

There is a close relationship between Reynolds operators and averaging operators as defined in Eq.~\eqref{eq:ave}. In fact, an idempotent linear operator is Reynolds if and only if it is averaging, a fact that is known to Miller and Rota~\cite{BR,Mi2}.

Indeed most known examples of Reynolds operators are idempotent and thus are given in the form of idempotent averaging operators~\mcite{PG,Ro1}; while most known non-idempotent Reynolds operators are the classical ones from analysis, such as the following ones. We will later provide some algebraically defined examples.

\begin{exam}~\cite{Ro2} Let $T^{t}$ be a one parameter semigroup of measure preserving transformations of a measure space $\left(S, \sum, m\right)$, and define $V^{t}f(s) = f(T^{t}s), s\in S$. The operator
$$R(f)(s): = \int^{\infty}_{0}e^{-t}V^{t}f(s) dt$$
is a Reynolds operator on the algebra $L_{\infty}\left(S, \sum, m\right)$ of bounded measurable functions on $(S, \sum, m)$.
\end{exam}

\begin{exam} ~\cite{BR}
Fix $-\infty<a< b\leq \infty$. Let $\mathrm{Cont}([a,b))$ be the $\RR$-algebra of real valued continuous functions on $[a,b)$ vanishing at $b$. Then $P(f)(x):= e^{-x}\int_{a}^{x}f(t)e^{t} dt$ is a Reynolds operator. It is not an averaging operator.
\mlabel{ex:ie1}
\end{exam}

\begin{remark}
This operator $P$ can be regarded as being defined by the convolution product $P(f):=e^{-x}\ast f$ where $(g\ast f)(x):=\int_{a}^0 g(t)f(x-t)\,dt$.
\end{remark}

The integration in the above example is a special case of Volterra (integral) operators~\mcite{Ze} with a kernel $K(x,t)$:
$$P_K(f)(x):=\int_a^x K(x,t)f(t)dt$$
defined on a suitable algebra of functions. So the above example is the case when $K(x,t)=e^{t-x}$. The following is another example. See~\mcite{GGL} for a general approach.

\begin{exam} \mlabel{ex:ie2}
Let $K(x,t)=x^{-1}$, then the linear operator $P=P_K$ on $C(\RR)$ gives
\[P(f)(x):=x^{-1}I(f)(x),\,f\in C(\RR),\]
where $I(f)(x)=\int_0^xf(t)dt$. Then for $f, g\in C(\RR)$, the difference of the two sides of Eq.~\eqref{eq:rey2} is
\begin{align*}
P(f)(x)&P(g)(x)-P(P(f)g)(x)-P(fP(g))(x)-P(P(f)P(g))(x)\\
&= \frac{1}{x^2}I(f)(x)I(g)(x) -\frac{1}{x}I\bigg(\frac{1}{t}I(f)g\bigg)(x) -\frac{1}{x}I\bigg(\frac{1}{t}fI(g)\bigg)(x) -\frac{1}{x}I\bigg(\frac{1}{t^2}I(f)I(g)\bigg).
\end{align*}

Denote the last expression by $F(x)$ and let $G(x):=xF(x)$. Then a direct check shows that $G'(x)=0$, implying that $G(x)$ is a constant. But $G(0)=0$ since $I(h)(x)=0$ for any $g\in C(\RR)$. Hence $G(x)$ is identically zero. Therefore $F(x)$ is also identically zero. This verifies the Reynolds identity for the operator $P$.
\end{exam}

We now provide some algebraically defined Reynolds operators that are not idempotent, starting with a simple one extracted from the previous example, with an algebraic proof.

\begin{exam}
In the polynomial algebra $\bfk[x]$, define a linear map by $$P:\bfk[x]\to \bfk[x],x^n\mapsto P(x^n):=\frac{1}{n+1}x^n, n\geq 0.$$
Then we have
$$
P(P(x^n)P(x^m)) =P\left(\frac{1}{(n+1)(m+1)}n^{n+m}\right)
=\frac{1}{(n+1)(m+1)(m+n+1)}x^{n+m}
$$
and
\begin{align*}
&P\left(P(f)g \right) + P\left(fP(g)\right) - P(f)P(g)\\
&=\left(\frac{1}{(n+1)(n+m+1)}+\frac{1}{(m+1)(n+m+1)}- \frac{1}{(n+1)(m+1)}\right)x^{n+m}\\
&=\frac{1}{(n+1)(m+1)(m+n+1)}x^{n+m}.
\end{align*}
Thus
\begin{equation*}
P\left(P(f)P(g)\right) = P\left(P(f)g \right) + P\left(fP(g)\right) - P(f)P(g) \tforall f, g\in \bfk[x]
\end{equation*}
and $P$ is a Reynolds operator on $\bfk[x]$.
\end{exam}

According to Rota~\cite{Ro2}, if $P$ is a Reynolds operator and if $P^{-1}$ exists, then $P^{-1}-\id$ is a derivation, where $\id$ is the identity operator. This leads to the natural question of deriving a Reynolds operator from a derivation. Our next result is to confirm this.

We first give the following combinatorial formula as a preparation.
\begin{lemma}
Let $p\geq1,1\leq r\leq q$ be the integer numbers. Then
\begin{equation}
\binc{p+r}{r-1}+\sum^{q}_{j=r}\binc{p+j}{j}=\binc{p+q+1}{p+1}.
\mlabel{eq:Icom}
\end{equation}
In special case when $r=1$, we obtain
\begin{equation}
\sum^{q}_{j=0}\binc{p+j}{j}=\binc{p+q+1}{p+1}.
\mlabel{eq:Icom'}
\end{equation}
\end{lemma}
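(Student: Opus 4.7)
The plan is to prove the general identity \eqref{eq:Icom} by induction on $q \ge r$, using only Pascal's rule $\binom{n}{k}+\binom{n}{k-1}=\binom{n+1}{k}$; the special case \eqref{eq:Icom'} will then follow by a second, short argument (or by observing that \eqref{eq:Icom'} is essentially the standard hockey-stick identity, which is itself the base case of the same induction with $r$ set to $1$).

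First, I would verify the base case $q=r$. There the left-hand side is
\[
\binom{p+r}{r-1}+\binom{p+r}{r},
\]
and one application of Pascal's rule collapses this to $\binom{p+r+1}{r}=\binom{p+r+1}{p+1}$, matching the right-hand side.

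For the inductive step, assume \eqref{eq:Icom} holds for some $q \ge r$. Then
\[
\binom{p+r}{r-1}+\sum_{j=r}^{q+1}\binom{p+j}{j} = \binom{p+q+1}{p+1}+\binom{p+q+1}{q+1}.
\]
Rewriting $\binom{p+q+1}{q+1}=\binom{p+q+1}{p}$ and applying Pascal's rule once more yields $\binom{p+q+2}{p+1}$, which is exactly the right-hand side of \eqref{eq:Icom} with $q$ replaced by $q+1$. This completes the induction.

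Finally, for \eqref{eq:Icom'}, I would just note that taking $r=1$ in \eqref{eq:Icom} turns the leading term $\binom{p+r}{r-1}$ into $\binom{p+1}{0}=1=\binom{p+0}{0}$, which absorbs as the $j=0$ term of the sum, giving the desired formula. There is no genuine obstacle here; the only thing to be careful about is the index bookkeeping in the base case (to ensure the sum $\sum_{j=r}^{q}$ is nonempty, taken as $q=r$), and the rewriting $\binom{p+q+1}{q+1}=\binom{p+q+1}{p}$ that makes Pascal's rule applicable in the inductive step.
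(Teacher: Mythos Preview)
Your proof is correct and follows essentially the same approach as the paper: both prove \eqref{eq:Icom} by induction (the paper on $q-r$, you on $q$ with $r$ fixed, which amounts to the same thing), using Pascal's rule for both the base case $q=r$ and the inductive step, and then specialize to $r=1$ for \eqref{eq:Icom'}. Your handling of the special case, absorbing $\binom{p+1}{0}$ as the $j=0$ term, is a bit more explicit than the paper's, but the arguments are otherwise identical.
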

\begin{proof}
We verify Eq.~(\mref{eq:Icom}) by induction on $q-r\geq 0$. When $q-r=0$, then $q=r$. Thus
\begin{align*}
\binc{p+r}{r-1}+\sum^{q}_{j=r}\binc{p+j}{j}=\binc{p+q}{q-1}+\binc{p+q}{q}
=\binc{p+q+1}{q+1}=\binc{p+q+1}{p+1}.
\end{align*}

Assume that Eq.~(\mref{eq:Icom}) holds for $q-r=k\geq0$ and consider the case when $q-r=k+1$. Then $q-1-r=k$. Thus
\begin{align*}
\binc{p+r}{r-1}+\sum^{q}_{j=r}\binc{p+j}{j}
&=\binc{p+r}{r-1}+\sum^{q-1}_{j=r}\binc{p+j}{j}+\binc{p+q}{p}\\
&=\binc{p+q}{q-1}+\binc{p+q}{p}\quad(\text{by the induction hypothesis})\\
&=\binc{p+q}{p-1}+\binc{p+q}{p}\\
&=\binc{p+q+1}{p}.
\end{align*}
\end{proof}

\begin{prop}
Let ($R, d$) be a differential algebra, as defined in Eq.~(\mref{eq:diff}), such that the series $\sum\limits_{n=0}^\infty d^n(r)$ is convergent for all $r\in R$. Then the operator
$P:= \sum\limits_{n=0}^\infty(-1)^{n}d^{n}$ defines a Reynolds operator on $R$.
This is the case when $d$ is locally nilpotent on $R$ in the sense that, for any $r\in R$, there is $N:=N_{r}\in \NN$, such that $d^{N}(r)=0$.
\mlabel{prop:diffrey}
\end{prop}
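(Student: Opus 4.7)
The plan is to exploit the formal identity $P=(I+d)^{-1}$ and reduce the Reynolds identity to a single application of the Leibniz rule. Under the convergence hypothesis (and in particular in the locally nilpotent case, where all relevant sums are finite), the series defining $P$ is absolutely convergent and $d$ commutes termwise with it, so a telescoping argument yields the two key relations
\[
dP \;=\; Pd \;=\; \id - P,
\]
that is, $d(P(r)) = r - P(r)$ and $P(d(r)) = r - P(r)$ for every $r\in R$. Indeed, $dP(r)=\sum_{n\ge 0}(-1)^n d^{n+1}(r) = -\sum_{m\ge 1}(-1)^m d^m(r) = r-P(r)$, and the computation for $Pd$ is identical.

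With these in hand, I would first apply the Leibniz rule from Eq.~\eqref{eq:diff} to the product $P(u)P(v)$:
\[
d\bigl(P(u)P(v)\bigr) = d(P(u))\,P(v) + P(u)\,d(P(v)) = (u-P(u))P(v) + P(u)(v-P(v)).
\]
Distributing and collecting gives
\[
d\bigl(P(u)P(v)\bigr) = uP(v) + P(u)v - 2P(u)P(v).
\]
Next, I would apply $P$ to both sides. On the left, the relation $Pd = \id - P$ turns $P(d(P(u)P(v)))$ into $P(u)P(v) - P(P(u)P(v))$, while on the right linearity gives $P(uP(v)) + P(P(u)v) - 2P(P(u)P(v))$. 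Equating and rearranging produces
\[
P(u)P(v) = P(uP(v)) + P(P(u)v) - P(P(u)P(v)),
\]
which is precisely Eq.~\eqref{eq:rey2}.

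The only real obstacle is bookkeeping: one must justify pulling $d$ inside the infinite series defining $P$, i.e.\ that $d$ is continuous with respect to the convergence structure in which $\sum d^n(r)$ is assumed to converge. This is automatic under local nilpotency, where everything collapses to finite sums, and under the general convergence hypothesis it is the natural continuity condition implicit in the statement. No combinatorial machinery beyond the Leibniz rule is required; the binomial identity proved in the preceding lemma would only enter if one preferred the more computational route of expanding $d^n(uv)=\sum_{k=0}^{n}\binc{n}{k}d^k(u)d^{n-k}(v)$ term by term.
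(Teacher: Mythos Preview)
Your argument is correct and considerably more efficient than the paper's. The paper proceeds by brute force: it expands each of $P(u)P(v)$, $P(P(u)P(v))$, $P(P(u)v)$, and $P(uP(v))$ as a double series in $d^p(u)d^q(v)$ via the generalized Leibniz rule, compares coefficients, and reduces the Reynolds identity to the combinatorial equation
\[
\sum_{i=0}^{p}\sum_{j=0}^{q}\binc{i+j}{i}=\sum_{i=0}^{p}\binc{i+q}{i}+\sum_{j=0}^{q}\binc{p+j}{j}-1,
\]
which it then proves by induction using the preparatory lemma (Eq.~\eqref{eq:Icom'}). Your route bypasses all of this by first isolating the operator identity $dP=Pd=\id-P$ from the telescoping of $\sum(-1)^n d^{n+1}$, and then closing the argument with a single application of the Leibniz rule followed by one application of $P$. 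The convergence caveats you raise are the same ones implicitly present in the paper's series manipulations, so nothing is lost. What your approach buys is brevity and a transparent explanation of \emph{why} the Reynolds identity holds (it is exactly what survives after $P$ kills the derivative in $d(P(u)P(v))$); what the paper's approach buys is a self-contained computation that does not require spotting the inverse-operator relation, at the cost of the auxiliary binomial lemma.
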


\begin{proof}
Evidently, if $d$ is locally nilpotent on $R$, then the series $\sum\limits_{n=0}^\infty d^n(r)$ is a finite sum and hence convergent. Thus we only need to prove the first statement.

For any $u,v \in R$, we have
\begin{equation}
P(u)P(v)= \sum\limits^{}_{\mbox{\tiny$\begin{array}{c}m\geq0, n\geq0\end{array}$}}(-1)^{m+n}d^{m}(u)d^{n}(v).
\mlabel{eq:dr1}
\end{equation}
Applying the generalized Leibniz rule
$$
d^k(uv)=\sum_{i+j=k}\binc{k}{i} d^i(u)d^j(v),$$
we have
\begin{align*}
P(P(u)P(v))&= \sum_{m, n, k\geq 0}
\sum_{i+j=k} (-1)^{m+n+k}{{k}\choose{i}}d^{m+i}(u)d^{n+j}(v)\\
&= \sum_{m, n, i, j\geq 0} (-1)^{m+n+i+j} \binc{i+j}{i} d^{m+i}(u)d^{n+j}(v)\\
&= \sum_{p, q\geq 0} \sum_{m+i=p,n+j=q} (-1)^{p+q} \binc{i+j}{i} d^p(u)d^q(v)\\
&=\sum_{p, q\geq 0} (-1)^{p+q}\left(\sum_{i=0}^p
\sum_{j=0}^q {{i+j}\choose{i}}\right)d^{p}(u)d^{q}(v).
\end{align*}
Similarly, we obtain
\begin{align*}
P(P(u)v)&= \sum\limits^{}_{\mbox{\tiny$\begin{array}{c}m\geq0, k\geq0\end{array}$}}
\sum\limits^{}_{\mbox{\tiny$\begin{array}{c}i+j=k\end{array}$}}(-1)^{m+k}{{k}\choose{i}}d^{m+i}(u)d^{j}(v)\\
&=\sum\limits^{}_{\mbox{\tiny$\begin{array}{c}l\geq0\end{array}$}}(-1)^{l}\sum\limits^{}_{\mbox{\tiny$\begin{array}{c}p+j=l\end{array}$}}
\sum\limits^{}_{\mbox{\tiny$\begin{array}{c}m+i=p\end{array}$}}{{i+j}\choose{i}}d^{p}(u)d^{j}(v)\\
&=\sum\limits^{}_{\mbox{\tiny$\begin{array}{c}p\geq0, j\geq0\end{array}$}}(-1)^{p+j}\sum\limits^{p}_{\mbox{\tiny$\begin{array}{c}i=0\end{array}$}}
{{i+j}\choose{i}}d^{p}(u)d^{j}(v)
\end{align*}
and
\begin{align*}
P(uP(v))&=
\sum\limits^{}_{\mbox{\tiny$\begin{array}{c}q\geq0, i\geq0\end{array}$}}(-1)^{q+i}\sum\limits^{q}_{\mbox{\tiny$\begin{array}{c}j=0\end{array}$}}
{{i+j}\choose{j}}d^{i}(u)d^{q}(v).
\end{align*}
Substituting these equations into the Reynolds identity (\ref{eq:rey2}) and comparing the coefficients of $d^m(u)d^n(v)$, we see that we only need to verify the equation
\begin{equation}
\sum\limits^{p}_{\mbox{\tiny$\begin{array}{c}i=0\end{array}$}}\sum\limits^{q}_{\mbox{\tiny$\begin{array}{c}j=0\end{array}$}}{{i+j}\choose{i}}
=\sum\limits^{p}_{\mbox{\tiny$\begin{array}{c}i=0\end{array}$}}
{{i+q}\choose{i}} + \sum\limits^{q}_{\mbox{\tiny$\begin{array}{c}j=0\end{array}$}}
{{p+j}\choose{j}}-1
\mlabel{eq:indeq}
\end{equation}
for which we proceed by applying the induction on $k:=p+q$.

In the case when $p+q=0$, namely $p=q=0$, the equation is checked directly. Let $n\geq 0$. Assume that Eq.~(\mref{eq:indeq}) holds for all $p, q\geq 0$ with $p+q= n$. Consider a pair $p, q \geq 0$ with $p+q=n+1$. So $p+q\geq 1$. By the symmetry of Eq.~(\ref{eq:indeq}) in $p$ and $q$, we can assume $p\geq 1$. Then $p-1+q=n$.
Applying the induction hypothesis, the left hand side of Eq.~(\mref{eq:indeq}) becomes

\begin{align*}
\sum\limits^{p}_{\mbox{\tiny$\begin{array}{c}i=0\end{array}$}}
\sum\limits^{q}_{\mbox{\tiny$\begin{array}{c}j=0\end{array}$}}{{i+j}\choose{i}}
&=\sum\limits^{p-1}_{\mbox{\tiny$\begin{array}{c}i=0\end{array}$}}
\sum\limits^{q}_{\mbox{\tiny$\begin{array}{c}j=0\end{array}$}}{{i+j}\choose{i}}+ \sum\limits^{q}_{\mbox{\tiny$\begin{array}{c}j=0\end{array}$}}{{p+j}\choose{j}}\\
&=\sum\limits^{p-1}_{\mbox{\tiny$\begin{array}{c}i=0\end{array}$}}
{{i+q}\choose{i}} + \sum\limits^{q}_{\mbox{\tiny$\begin{array}{c}j=0\end{array}$}}
{{p-1+j}\choose{j}}-1+ \sum\limits^{q}_{\mbox{\tiny$\begin{array}{c}j=0\end{array}$}}{{p+j}\choose{j}}
\end{align*}
while the right hand side of Eq.~(\mref{eq:indeq}) becomes
$$
\sum\limits^{p}_{\mbox{\tiny$\begin{array}{c}i=0\end{array}$}}
{{i+q}\choose{i}} + \sum\limits^{q}_{\mbox{\tiny$\begin{array}{c}j=0\end{array}$}}
{{p+j}\choose{j}}-1
= \sum\limits^{p-1}_{\mbox{\tiny$\begin{array}{c}i=0\end{array}$}}
{{i+q}\choose{i}} + \sum\limits^{q}_{\mbox{\tiny$\begin{array}{c}j=0\end{array}$}}
{{p+j}\choose{j}}-1+{{p+q}\choose{p+1}}.
$$
Therefore, we only need to prove $$\sum\limits^{q}_{\mbox{\tiny$\begin{array}{c}j=0\end{array}$}}
{{p-1+j}\choose{j}}={{p+q}\choose{p+1}}.$$
That is,
$$\sum\limits^{q}_{\mbox{\tiny$\begin{array}{c}j=0\end{array}$}}
{{p+j}\choose{j}}={{p+q+1}\choose{p+1}},$$
which is simply Eq.~\eqref{eq:Icom'}. This completes the induction.
\end{proof}

As an application of Proposition~\mref{prop:diffrey}, we construct Reynolds operators on the polynomial algebra.

\begin{exam}
Let $\RR[x]$ be the polynomial algebra with the standard derivation $d:=\frac{d}{dx}$. Then $d$ is locally nilpotent since $d^{n+1}(f)=0$ if $f$ has degree $n$. Hence by Proposition~\mref{prop:diffrey},
$$P:= \sum\limits_{n=0}^\infty(-1)^{n}d^{n}$$
is a Reynolds operator on $\RR[x]$.
\end{exam}

\subsection{Generalizations and basic properties}

For broader applications, we generalize the concept of the Reynolds operator as follows.
\begin{defn}
Let $\bfk$ be a unitary commutative ring and $\lambda$ a given element of $\bfk$. A {\bf Reynolds algebra $R$ of weight $\lambda$} is a pair $(R,P)$ consisting of an algebra $R$ and a linear operator $P: R\rightarrow R$ that satisfies the following {\bf Reynolds identity of weight $\lambda$}:
\begin{equation}
P(u)P(v) = P(uP(v))+P(P(u)v)+\lambda P(P(u) P(v))\tforall u,v \in R.
\mlabel{eq:reywt}
\end{equation}
\end{defn}
In this sense, the Reynolds operator is the special case when the weight is $-1$ and the Rota-Baxter operator of weight zero is the special case of a weighted Reynolds operator when the weight is zero. It is easy to check that if $P$ is a Reynolds operator of weight $\lambda$, then $\lambda P$ is a Reynolds operator of weight $1$ and $-P$ is a Reynolds operator of weight $-\lambda$.

As in the case of Rota-Baxter algebras~\mcite{Gub}, a Reynolds algebra structure can replicate itself as follows.

\begin{theorem}
Let $(R,P)$ be a Reynolds algebra of weight $\lambda$. Define a multiplication $\star$ on $R$ by
\begin{equation}
u\star v := uP(v) + P(u)v + \lambda P(u)P(v)\tforall u,v\in R.
\mlabel{eq:ndef}
\end{equation}
Then
\begin{enumerate}
\item
$P(u)P(v) = P(u\star v)$.
\mlabel{it:repa}
\smallskip
\item
$(R, \star)$ is an algebra.
\mlabel{it:repb}
\item
$(R, \star, P)$ is a Reynolds algebra of weight $\lambda$.
\mlabel{it:repc}
\item
$P$ is a Reynolds algebra homomorphism from $(R, \star, P)$ to $(R, \cdot, P)$. Here $\cdot$ denotes the original multiplication on $R$.
\mlabel{it:repd}
\end{enumerate}
\end{theorem}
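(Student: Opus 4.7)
The whole theorem pivots on part (\mref{it:repa}), which is an immediate rewriting of the defining identity~\eqref{eq:reywt}. The plan is to establish (\mref{it:repa}) first, and then use it repeatedly as a device to collapse every expression of the form $P(u\star v)$ into the product $P(u)P(v)$ in the original algebra. Concretely, by the weighted Reynolds identity
\[
P(u)P(v) = P(uP(v)) + P(P(u)v) + \lambda P(P(u)P(v)) = P\bigl(uP(v) + P(u)v + \lambda P(u)P(v)\bigr) = P(u\star v),
\]
so (\mref{it:repa}) requires no more than invoking $\bfk$-linearity of $P$.

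For associativity in part (\mref{it:repb}), I will expand $(u\star v)\star w$ and $u\star (v\star w)$ directly using the definition~\eqref{eq:ndef}, substituting $P(u\star v)=P(u)P(v)$ from part (\mref{it:repa}) wherever the operator $P$ hits a $\star$-product. Both sides should reduce to the same sum
\[
uP(v)P(w)+P(u)vP(w)+P(u)P(v)w+2\lambda P(u)P(v)P(w),
\]
using only the associativity and distributivity of the original multiplication. This is the lengthiest computation of the proof but is entirely mechanical.

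For part (\mref{it:repc}) I must check that $P$ satisfies the weighted Reynolds identity relative to $\star$, i.e.\
\[
P(u)\star P(v) = P(u\star P(v)) + P(P(u)\star v) + \lambda P(P(u)\star P(v)).
\]
Applying (\mref{it:repa}) to each term on the right collapses it to $P(u)P(P(v)) + P(P(u))P(v) + \lambda P(P(u))P(P(v))$, which is precisely $P(u)\star P(v)$ by the very definition of $\star$. So this part reduces to an instantaneous comparison once (\mref{it:repa}) is in hand. Finally, part (\mref{it:repd}) demands that $P\colon (R,\star,P)\to (R,\cdot,P)$ be multiplicative and commute with $P$; the first condition is exactly (\mref{it:repa}) and the second is a tautology.

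The only place where a genuine obstacle could appear is the associativity calculation in (\mref{it:repb}): the four $\lambda$-terms coming from the two nested applications of $\star$ must balance, and a careless accounting of the $\lambda P(u)P(v)P(w)$ contributions is the likely source of slips. Once that expansion is done symmetrically on both sides, the remaining three parts follow almost without computation from part (\mref{it:repa}).
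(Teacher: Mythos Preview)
Your proposal is correct and follows essentially the same route as the paper: part~(\mref{it:repa}) is the immediate rewriting of the weighted Reynolds identity, part~(\mref{it:repb}) is the direct expansion of both triple products using~(\mref{it:repa}) to replace each $P$ of a $\star$-product, yielding the common value $uP(v)P(w)+P(u)vP(w)+P(u)P(v)w+2\lambda P(u)P(v)P(w)$, and parts~(\mref{it:repc}) and~(\mref{it:repd}) collapse at once via~(\mref{it:repa}). The only cosmetic difference is that in~(\mref{it:repc}) the paper argues from $P(u)\star P(v)$ toward the right-hand side while you argue in the reverse direction, which is immaterial.
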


\begin{proof}
(\mref{it:repa}). It follows directly from the Reynolds identity in Eq.~(\mref{eq:rey2}).
\smallskip

\noindent
(\mref{it:repb}).
We just need to verify the associativity of $\star$, that is, for $u,v,w\in R$, the following holds
\begin{equation}
(u \star v) \star w = u \star (v \star w).
\mlabel{eq:repprod}
\end{equation}
Indeed,
\begin{align*}
(u \star v) \star w
&= (u \star v)P(w) + P(u \star v)w + \lambda P(u \star v)P(w)\quad (\mbox{by Eq.}~(\mref{eq:ndef}))\\
&= (u \star v)P(w) + (P(u)P(v))w + \lambda (P(u)P(v))P(w)\quad (\mbox{by Item}~(\mref{it:repa}))\\
&= (u P(v))P(w) + (P(u)v)P(w) + 2\lambda (P(u)P(v))P(w) + (P(u)P(v))w \quad\text{(by Eq.~(\mref{eq:ndef})).}
\end{align*}
Similarly,
$$u \star (v \star w)= u(P(v)P(w)) + P(u)(v P(w)) + 2\lambda P(u)(P(v)P(w)) + P(u)(P(v)w).$$
Hence Eq.~(\mref{eq:repprod}) follows from the associativity of $(R, \cdot)$.

(\mref{it:repc}). For $u,v\in R$, we obtain
\begin{align*}
P(u)\star P(v)
&= P(u)P^{2}(v) + P^{2}(u)P(v) + \lambda P^{2}(u)P^{2}(v)\quad(\mbox{by Eq.}~(\mref{eq:ndef}))\\
&= P(u\star P(v)) + P(P(u)\star v) + \lambda P(P(u)\star P(v)),\quad(\mbox{by Item}~(\mref{it:repa}))
\end{align*}
as needed.

(\mref{it:repd}). By Item~(\mref{it:repa}), $P$ is an algebra homomorphism. Furthermore, $P$ commutes with itself. So it is a Reynolds algebra homomorphism.
\end{proof}
Rota~\mcite{Ro2} suggested that the Reynolds operator is an infinitesimal analog of the Rota-Baxter operator. We give the following interpretation.

Let $(R,P)$ be a Reynolds algebra and $u, v$ two arbitrary elements in $R$. Denote $u\ast v:= uP(v)+P(u)v$. Then $P$ is a Rota-Baxter operator of weight zero means that $P(u)P(v)=P(u\ast v)$. On the other hand, using this notation, the Reynolds equation of weight $\lambda$ in Eq.~(\mref{eq:reywt}) becomes
$$ P(u)P(v)=P(u\ast v)+\lambda P(P(u)P(v)).$$
By repeatedly applying Eq.~(\mref{eq:reywt}) to the $P(u)P(v)$ on its right hand side of this equation, the right hand side yields
\begin{equation}
P(u)P(v)=\sum_{n=0}^k \lambda^n P^{n+1}(u\ast v) +\lambda^{k+1}P^{n+1}(P(u)P(v)),
\mlabel{eq:inf0}
\end{equation}
and eventually
\begin{equation}
P(u)P(v)=\sum_{n=0}^\infty \lambda ^n P^{n+1}(u\ast v)
\mlabel{eq:inf}
\end{equation}
formally. This can be made precise if the right hand side makes sense as we now show. See~\mcite{Gub} for example for background on completions and inverse limits.

\begin{prop}
Let $R\cong \invlim R/I_n$ be a complete filtered algebra given by the decreasing sequence of ideals $I_n$ of $R$. If $P$ is a Reynolds operator on $R$ such that $P(I_n)\subseteq I_{n+1}$, then Eq.~(\mref{eq:inf}) holds.
\mlabel{pp:series}
\end{prop}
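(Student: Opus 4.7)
The plan is to first establish the finite partial-sum identity in Eq.~(\mref{eq:inf0}) (with the obvious correction of the exponent $P^{n+1}$ in the remainder to $P^{k+1}$) by induction on $k$, and then to use the filtration hypothesis $P(I_n) \subseteq I_{n+1}$ to show that the remainder term vanishes in the limit and that the infinite series on the right-hand side of Eq.~(\mref{eq:inf}) converges in the inverse-limit topology.

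For the induction, the base case $k=0$ is simply the weighted Reynolds identity in Eq.~(\mref{eq:reywt}) rewritten in the form
$$P(u)P(v) \;=\; P(u\ast v) + \lambda\, P(P(u)P(v)).$$
For the inductive step, assuming the identity for some $k$, I apply this same rewriting once more to the factor $P(u)P(v)$ appearing inside the remainder $\lambda^{k+1} P^{k+1}(P(u)P(v))$. By linearity of $P^{k+1}$ this splits as $\lambda^{k+1} P^{k+2}(u\ast v) + \lambda^{k+2} P^{k+2}(P(u)P(v))$, which, when substituted back, extends the sum by one term and leaves a remainder of the correct shape for index $k+1$.

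For the passage to the limit, I observe that the hypothesis $P(I_n) \subseteq I_{n+1}$ together with the standard convention $I_0 = R$ yields, by a trivial induction, $P^m(R) \subseteq I_m$ for every $m\geq 0$. Applied to $u\ast v \in R$ this gives $\lambda^n P^{n+1}(u\ast v) \in I_{n+1}$, so the partial sums $\sum_{n=0}^{k}\lambda^n P^{n+1}(u\ast v)$ form a Cauchy sequence in the filtration topology and hence converge by completeness of $R\cong\invlim R/I_n$. Applied to $P(u)P(v)\in R$ it gives $\lambda^{k+1} P^{k+1}(P(u)P(v)) \in I_{k+1}$, which therefore tends to $0$ as $k\to\infty$. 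Passing to the limit in the partial-sum identity of Eq.~(\mref{eq:inf0}) then delivers Eq.~(\mref{eq:inf}).

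I do not anticipate a serious obstacle: the only points that need care are the book-keeping in the induction, in particular not conflating the exponent of the remainder with the summation index, and the implicit identification $I_0 = R$ used to bootstrap $P(R)\subseteq I_1$. Should the filtration not start at $I_0 = R$, one replaces $I_0$ by any ideal in the filtration containing both $P(u)P(v)$ and $u \ast v$ and shifts indices accordingly; this is a cosmetic adjustment and does not affect the argument.
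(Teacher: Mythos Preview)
Your proposal is correct and follows essentially the same approach as the paper's proof: use the partial-sum identity Eq.~(\mref{eq:inf0}) and then invoke the filtration hypothesis to show that the remainder lies in $I_{k+1}$ and hence tends to zero. In fact your argument is more carefully written than the paper's: the paper simply asserts Eq.~(\mref{eq:inf0}) in the discussion preceding the proposition and then observes that the remainder lies in $P^{k+1}(R)$ with $\bigcap_{k\geq 0} P^{k+1}(R)=0$, whereas you supply the induction for Eq.~(\mref{eq:inf0}), make the inclusion $P^m(R)\subseteq I_m$ explicit (with the $I_0=R$ convention), and separately check that the series itself converges.
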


\begin{proof}
For each given $k\geq 0$, by Eq.~(\mref{eq:inf0}), the difference
$P(u)P(v)=\sum_{n=0}^k \lambda^n P^{n+1}(u\ast v) $ lies in $P^{n+1}(R)$. Since $R$ is complete with respect to the metric defined by $I_n$ and $P(I_n)\subseteq I_{n+1}, n\geq 0,$ we have
$$\bigcap_{k\geq 0}P^{k+1}(R)=0.$$
This proves the proposition.
\end{proof}

As noted in the introduction, the format of the Reynolds identity is similar to the identity of the Rota-Baxter operator:
$$P(u)P(v)=P(uP(v))+P(P(u)v)+\lambda P(uv) \tforall u, v\in R$$
or more generally the Rota-Baxter type operators listed in~\mcite{ZGGS}. The key difference is that the left hand side also appears in the right hand side of the equation, leading to an infinite loop if the equation is used a rewriting role replacing the left hand side by the right hand side. This poses challenges in constructing free Reynolds algebras. To overcome this difficulty, we  use the following alternative form of the Reynolds identity.
\begin{equation}
P(P(u) P(v)) = P(uP(v))+ P(P(u)v)- P(u)P(v) \tforall u,v \in R
\mlabel{eq:rey3}
\end{equation}
by keeping the term with the highest total order of $P$ to the left hand side. As it turns out, the reduction role given by this equation is still not enough for the general purpose. Instead, there are multi-variant Reynolds identities as shown below that one also needs to take into consideration in the reduction.

\begin{prop}
Let $m\geq2$ be a positive integer and $(R,P)$ a Reynolds algebra of weight $-1$. Then
\begin{equation}
(m-1)P\bigg(\prod_{i=1}^{m}P(u_{i})\bigg)=\sum_{i=1}^{m}P(P(u_{1})\cdots P(u_{i-1})u_{i} P(u_{i+1})\cdots P(u_{m}))-\prod_{i=1}^{m}P(u_{i}),
\mlabel{eq:R}
\end{equation}
where $u_{i},1\leq i\leq m,$ are in $R$.
\mlabel{prop:R}
\end{prop}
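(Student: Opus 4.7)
I would leverage the preceding replication theorem applied with $\lambda = -1$: setting $u \star v := uP(v) + P(u)v - P(u)P(v)$ yields an associative product on $R$ satisfying $P(u)P(v) = P(u\star v)$, and iterating this gives $\prod_{i=1}^m P(u_i) = P(u_1 \star u_2 \star \cdots \star u_m)$. The plan is to reduce Eq.~(\ref{eq:R}) to the following combinatorial expansion of the $m$-fold $\star$-product:
$$u_1 \star u_2 \star \cdots \star u_m \ =\ \sum_{i=1}^m P(u_1) \cdots P(u_{i-1})\, u_i\, P(u_{i+1}) \cdots P(u_m)\ -\ (m-1)\prod_{i=1}^m P(u_i).$$
Call this identity $(\dagger)$. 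Indeed, applying $P$ to both sides of $(\dagger)$ and using $P(u_1 \star \cdots \star u_m) = \prod_{i=1}^m P(u_i)$ on the left yields Eq.~(\ref{eq:R}) immediately.

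I would then prove $(\dagger)$ by induction on $m \geq 2$. The base case $m = 2$ is exactly the definition of $\star$. For the inductive step from $m$ to $m+1$, use associativity and the definition of $\star$ to write
\begin{align*}
u_1 \star \cdots \star u_{m+1}
&= (u_1 \star \cdots \star u_m)P(u_{m+1}) + P(u_1 \star \cdots \star u_m)\,u_{m+1}\\
&\qquad - P(u_1 \star \cdots \star u_m)P(u_{m+1}).
\end{align*}
In the last two terms, the replication identity collapses $P(u_1 \star \cdots \star u_m)$ to $V_m := \prod_{i=1}^m P(u_i)$; in the first term, I would substitute the inductive hypothesis. The summand from the $i$-th term of the hypothesis ($i \le m$) multiplied on the right by $P(u_{m+1})$ gives precisely the $i$-th summand in the target $(m+1)$-variable sum, while $V_m\,u_{m+1}$ supplies the new $(m+1)$-st summand.

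The one delicate point — and essentially the only potential obstacle — is tracking the coefficient of $\prod_{i=1}^{m+1} P(u_i)$: the inductive hypothesis contributes $-(m-1)\,V_m P(u_{m+1})$, and the explicit expansion above contributes an additional $-V_m P(u_{m+1})$, combining to $-m\prod_{i=1}^{m+1} P(u_i) = -((m+1)-1)\prod_{i=1}^{m+1} P(u_i)$, which is exactly the coefficient prescribed by $(\dagger)$ at level $m+1$. This closes the induction, and applying $P$ to $(\dagger)$ as in the first paragraph yields Eq.~(\ref{eq:R}).
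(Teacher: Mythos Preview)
Your proof is correct and takes a genuinely different route from the paper's. The paper proves Eq.~(\ref{eq:R}) by a direct induction on $m$: it applies the Reynolds identity~\eqref{eq:rey3} with $u=\prod_{i=1}^{k}P(u_i)$ and $v=u_{k+1}$, substitutes the induction hypothesis for $P\big(\prod_{i=1}^{k}P(u_i)\big)$ into each of the three resulting terms, and then cancels and collects to reach the $(k{+}1)$-variable formula. Your argument instead passes through the replication theorem: you first establish the purely multiplicative identity $(\dagger)$ for the associative product $\star$ by an easy induction (no nested $P$'s to manage), and only at the end apply $P$ once, using $P(u_1\star\cdots\star u_m)=\prod_i P(u_i)$, to recover Eq.~(\ref{eq:R}). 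What this buys you is a cleaner bookkeeping step---the induction in $(\dagger)$ is essentially linear and the coefficient check you flag as ``delicate'' is immediate---at the cost of invoking the replication theorem as a black box. The paper's approach is more self-contained but computationally heavier, requiring several cancellations after substituting the hypothesis three times.
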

Evidently Eq.~\eqref{eq:rey3} is the special case of Eq.~\eqref{eq:R} when $m=2$.
Roughly speaking, the equation can be regarded as a generalized Leibniz rule that expresses the left hand side as a sum in which each term is obtained from the left by omitting a $P$ from one of the locations. Here are some examples.
\begin{exam}
Let $x, y, z$ be three elements in a Reynolds algebra $(R,P)$ of weight $-1$. Then applying Eq.~\eqref{eq:R} gives
\begin{align*}
P(P( x) P( y)P( z))&=\frac{1}{2}P( xP( y)P( z) )+\frac{1}{2}P(P( x) yP( z))+\frac{1}{2}P(P( x)P( y) z)-\frac{1}{2}P( x)P( y)P( z).\\
P(P^{2}( x) P( y)P( z))&=\frac{1}{2}P( P(x)P( y)P( z) )+\frac{1}{2}P(P^2( x) yP( z))+\frac{1}{2}P(P^2( x)P( y) z)-\frac{1}{2}P^2( x)P( y)P( z)\\
&=\frac{1}{4}P( xP( y)P( z) )+\frac{1}{4}P(P( x) yP( z))+\frac{1}{4}P(P( x)P( y) z)-\frac{1}{4}P( x)P( y)P( z)\\
&~~+\frac{1}{2}P(P^2( x) yP( z))+\frac{1}{2}P(P^2( x)P( y) z)-\frac{1}{2}P^2( x)P( y)P( z).
\end{align*}
\mlabel{ex:R}
\end{exam}
\begin{proof} {\bf (of Proposition~\mref{prop:R})}
To simplify notations, we first use the abbreviation
\begin{equation*}
u^{*}_{i}=u^*_{m,i}:=P(u_{1})\cdots P(u_{i-1})u_{i} P(u_{i+1})\cdots P(u_{m}), \quad i=1,\cdots,m, \ m\geq 2. \end{equation*}
Then Eq.~\eqref{eq:R} becomes
\begin{equation}
(m-1)P\bigg(\prod_{i=1}^{m}P(u_{i})\bigg)=\sum_{i=1}^{m}P(u^*_i)-\prod_{i=1}^{m}P(u_{i}). \mlabel{eq:R2}
\end{equation}

We now prove Eq.~(\mref{eq:R}) by induction on $m\geq 2$ with the case when $m=2$ coming from Eq.~(\mref{eq:rey3}).
For $k\geq2$, assume that Eq.~(\mref{eq:R}) holds for $m\leq k$ and consider the case when $m=k+1$. By the induction hypothesis, the following equation holds
\begin{equation}
P\bigg(\prod_{i=1}^{k}P(u_{i})\bigg)
=\frac{1}{k-1}\left(\sum_{i=1}^{k}P(u^{*}_{i})-\prod_{i=1}^{k}P(u_{i})\right).
\mlabel{eq:H0}
\end{equation}
Furthermore, by Eq.~(\mref{eq:rey3}), we obtain
\begin{align*}
P\bigg(P\bigg(\prod_{i=1}^{k}P(u_{i})\bigg)P( u_{k+1})\bigg)
=P\bigg(\prod_{i=1}^{k+1}P(u_{i})\bigg)+P\bigg(P\bigg(\prod_{i=1}^{k}P(u_{i})\bigg)u_{k+1}\bigg)
-P\bigg(\prod_{i=1}^{k}P(u_{i})\bigg)P( u_{k+1}),
\end{align*}
from which we obtain
$$
P\bigg(\prod_{i=1}^{k+1}P(u_{i})\bigg)
=P\bigg(P\bigg(\prod_{i=1}^{k}P(u_{i})\bigg)P( u_{k+1})\bigg)+P\bigg(\prod_{i=1}^{k}P(u_{i})\bigg)P( u_{k+1})-P\bigg(P\bigg(\prod_{i=1}^{k}P(u_{i})\bigg)u_{k+1}\bigg).
$$
Applying the induction hypothesis in Eq.~\eqref{eq:H0} to the three terms on the right hand side, we obtain
\begin{align*}
P\bigg(\prod_{i=1}^{k+1}P(u_{i})\bigg)
&=\frac{1}{k-1}\left(\sum_{i=1}^{k}P(P(u^{*}_{i})P(u_{k+1}))-P\bigg(\prod_{i=1}^{k+1}P(u_{i})\bigg)\right)
+\frac{1}{k-1}\left(\sum_{i=1}^{k}P(u^{*}_{i})P( u_{k+1})-\prod_{i=1}^{k+1}P(u_{i})\right)\\
&~~+\frac{1}{k-1}\left(-\sum_{i=1}^{k}P(P(u^{*}_{i})u_{k+1})+P\bigg(\prod_{i=1}^{k}P(u_{i}) u_{k+1}\bigg)\right).
\end{align*}
Applying Eq.~\eqref{eq:rey3} again, we obtain
\begin{align*}
P\bigg(\prod_{i=1}^{k+1}P(u_{i})\bigg)
&=\frac{1}{k-1}\sum_{i=1}^{k}P(u^{*}_{i}P( u_{k+1}))
+\frac{1}{k-1}\sum_{i=1}^{k}P(P(u^{*}_{i})u_{k+1})
-\frac{1}{k-1}\sum_{i=1}^{k}P(u^{*}_{i})P( u_{k+1})\\
&~~-\frac{1}{k-1}P\bigg(\prod_{i=1}^{k+1}P(u_{i})\bigg)
+\frac{1}{k-1}\sum_{i=1}^{k}P(u^{*}_{i})P( u_{k+1})
-\frac{1}{k-1}\prod_{i=1}^{k+1}P(u_{i})\\
&~~-\frac{1}{k-1}\sum_{i=1}^{k}P(P(u^{*}_{i})u_{k+1})
+\frac{1}{k-1}P\bigg(\prod_{i=1}^k P(u_i) u_{k+1}\bigg).
%
\end{align*}
On the right hand side, the second term is canceled with the seventh term and the third with the fifth. Also combine the first term with the last one and move the fourth term moved to the left hand side. Noting that
$$\sum_{i=1}^k P(u_{k,i}^*P(u_{k+1})) + P\left(\prod_{i=1}^kP(u_i) u_{k+1}\right) = \sum_{i=1}^{k+1} P(u_{k+1,i}^*),$$
we obtain
\begin{align*}
\frac{k}{k-1}P\bigg(\prod_{i=1}^{k+1}P(u_{i})\bigg) =\frac{1}{k-1}\sum_{i=1}^{k+1}P(u^{*}_{i})
-\frac{1}{k-1}\prod^{k+1}_{i=1}P(u_{i}).
\end{align*}
This completes the induction.
\end{proof}

\section{Free Reynolds algebras on a set}
\mlabel{sec:free}

In this section we construct the free Reynolds algebra on a set $X$ by using bracketed words in addressing Question~\ref{qu:birk} of R.~Birkhoff. In Section~\mref{ss:pre} we recall some background on bracketed words and identify a subset of bracketed words that will serve as the basis of the free Reynolds algebra to be constructed in Section~\mref{ss:free} where the desired structures and properties of the free Reynolds algebra will be verified.

\subsection{Bracketed words and Reynolds words}
\mlabel{ss:pre}

We first recall from~\mcite{Gop} the construction of the free operated monoid generated by a set.
For any nonempty set $Y$, let $M(Y)$ denote the free monoid generated by $Y$ with the identity $\bfone$. Let $\lfloor Y \rfloor := \{ \lfloor y \rfloor ~|~ y \in Y \}$ be a replica of $Y$. In other words, it is the set $\lfloor Y \rfloor$ indexed by $Y$ but is disjoint from $Y$.

Let $X$ be a nonempty set. We define a direct system as follows. Let
$$
\mathfrak{M}_{0}:= M(X), \quad \mathfrak{M}_{1}:=M(X \sqcup \lfloor \mathfrak{M}_{0} \rfloor ) = M (X \sqcup \lfloor M(X)\rfloor ),
$$
with the natural injection
$$
i_{0,1}: \mathfrak{M}_{0} = M(X) \hookrightarrow \mathfrak{M}_{1} = M(X \sqcup \lfloor \mathfrak{M}_{0} \rfloor).
$$
Inductively assuming that $\mathfrak{M}_{n-1}$ and $
i_{n-2,n-1}: \mathfrak{M}_{n-2} \hookrightarrow \mathfrak{M}_{n-1}$ have been obtained for $n \geq 2$, we define
$$
\mathfrak{M}_{n}:= M (X \sqcup \lfloor \mathfrak{M}_{n-1}\rfloor).
$$
Furthermore, by the freeness of $\mathfrak{M}_{n-1} = M(X \sqcup \lfloor \mathfrak{M}_{n-2} \rfloor)$ as a free monoid, the injection
$$
\lfloor \mathfrak{M}_{n-2} \rfloor  \hookrightarrow \lfloor \mathfrak{M}_{n-1} \rfloor.
$$
induces a monoid homomorphism
$$
\mathfrak{M}_{n-1} = S(X \sqcup \lfloor \mathfrak{M}_{n-2} \rfloor) \hookrightarrow M(X \sqcup \lfloor \mathfrak{M}_{n-1} \rfloor) = \mathfrak{M}_{n}.
$$
Finally, define $$\mathfrak{M}(X):= \dirlim\mathfrak{M}_{n}.$$
Elements in $\mathfrak{M}(X)$ are called {\bf bracketed words} on $X$. Define the {\bf depth} $\dep(w)$ of $w\in\frak M(X)$ to be
$$\dep(w):=\min\{n\mid w\in\frak M_{n}\}.$$
Taking direct limit on both sides of $\frak{M}_n = M(X \sqcup \lfloor \frak{M}_{n-1} \rfloor)$, we obtain
\begin{equation}
\frak{M}(X) = M(X \sqcup \lfloor \mathfrak{M}(X) \rfloor).
\mlabel{eq:Mon}
\end{equation}
Thus every bracketed word $w\neq \bfone$ has a unique decomposition, called the {\bf standard decomposition},
\begin{equation}
w =w_{1}w_{2} \cdots w_{b},
\mlabel{eq:decom}
\end{equation}
where $w_{i}$ is either in $X$ or $\lc\frak M(X)\rc$ for $i = 1, 2, \cdots, b$.

We now identify a subset of bracketed words that will serve as the linear basis of our construction of free Reynolds algebra on a set.

\begin{defn}
Let $X$ be a nonempty set. A bracketed word $w\in \frak M(X)$ is called a {\bf Reynolds word} if $w$ does not contain any subword of the form
$\lc\lc u_{1}\rc\cdots \lc u_{n}\rc\rc=\lc \prod_{i=1}^n \lc u_i\rc \rc$, where $u_{1},\cdots,u_{n}\in \mathfrak{M}(X)$ and $n\geq2$.
\end{defn}

For example, let $X=\{x\}$, then $\lc x\rc\lc x\rc, \lc x\lc x\rc\rc, \lc \lc x \rc\rc, \lc x \rc x\lc x \rc \lc x \rc x,\lc x \rc \lc x \rc \lc x \rc $ are Reynolds words with depths $1, 2, 2, 1, 1$ respectively. While $\lc \lc x \rc \lc x^2 \rc \rc, \lc\lc x \rc \lc x \rc \lc x \rc \rc$ and $\lc \lc\lc x\rc\rc \lc x\rc \rc$ are not Reynolds words.

Similar to Rota-Baxter trees \mcite{ZGG} and average trees \mcite{PG}, we will give a tree representation of the Reynolds words. Further discussions on the combinatorics and enumeration of free Reynolds algebras will be continued in another work.

First, we recall some basic concepts and facts of decorated planar rooted trees. For reference, see \mcite{Foi,Gub,ZGG}.

A {\bf rooted tree} is a connected and simply-connected set of vertices and oriented edges such that there is precisely one distinguish vertex, called the {\bf root}, with no incoming edge. Note that in the following all vertices are represented by a dot and the root vertex is at the bottom of a tree.
A {\bf planar rooted tree} is a rooted tree with a fixed embedding into the plane. Let $\calt$ be the set of planar rooted trees and $M(\calt)$ the free monoid generated by $\calt$ with concatenation product and with the identity $\bfone$. Elements in $\calf:=M(\calt)$ are called {\bf planar rooted forests}. We call the map
$$B^+:\calf\to \calf, \quad F\mapsto B^+(F)=\lc F\rc$$
the {\bf grafting operator} on $\calf$ with $B^+(\bfone)=\lc\bfone\rc=\bullet$. Note that $(\calf,B^+)$ is an operated monoid.

Now, we recall from~\mcite{ZGG} the structure of decorated planar rooted forest. Let $X$ be a set, $\sigmaup$ a symbol not in the set $X$. Denote by $\widetilde{X}:=X\cup\{\sigmaup\}$.
Let $\calt_\ell(\widetilde{X})$ denote the set of vertex decorated trees where elements of $X$ decorate the leaves only. In other words, all internal vertices, as well as possibly some of the leaf vertices, are decorated by $\sigmaup$.
Elements in $T\in\calt_\ell(\widetilde{X})$ (resp. $\calf_\ell(\widetilde{X}):=M(\calt_\ell(\widetilde{X}))$) are called {\bf decorated planar rooted trees} (resp. {\bf decorated planar rooted forests}). We denote the grafting operator
$$B^+_\sigmaup : \calf_\ell(\widetilde{X})\to \calf_\ell(\widetilde{X}), F\mapsto B^+_\sigmaup(F), \tforall F\in \calf_\ell(\widetilde{X}),$$
to be grafting a forest $F$ with a new root decorated by $\sigmaup$, with the convention that $B^+_\sigmaup(\bfone)=\bullet_\sigmaup$.

\begin{prop} \mcite{ZGG}
Let $X$ be a set, $\sigmaup\notin X$ a symbol and $\widetilde{X}:=X\cup\{\sigmaup\}$. Then $(\mathfrak{M}(X),\lc~\rc)$ and $(\calf_\ell(\widetilde{X}),B^+_\sigmaup)$ are free operated monoids on $X$. Hence there is a unique isomorphism of operated monoids
$\Phi: (\mathfrak{M}(X),\lc~\rc)\longrightarrow (\calf_\ell(\widetilde{X}),B^+_\sigmaup)$ sending $x$ to $\bullet_x$.
\end{prop}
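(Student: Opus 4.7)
The plan is to prove that each of the two operated monoid structures enjoys the universal property of the free operated monoid on $X$, and then invoke the uniqueness of free objects. The isomorphism $\Phi$ sending $x\mapsto \bullet_x$ will be the universal extension in one direction, with its two-sided inverse supplied by the universal extension of $\bullet_x \mapsto x$ in the other.

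First I would verify that $(\mathfrak{M}(X), \lc~\rc)$ is the free operated monoid on $X$. Given any operated monoid $(N, Q)$ and set map $f : X \to N$, I would construct the extension $\bar f : \mathfrak{M}(X) \to N$ inductively along the depth filtration $\mathfrak{M}_n$. On $\mathfrak{M}_0 = M(X)$, take $\bar f$ to be the unique monoid extension of $f$. Assuming $\bar f$ has been defined on $\mathfrak{M}_{n-1}$, extend it to $\lc \mathfrak{M}_{n-1} \rc$ via $\lc w\rc \mapsto Q(\bar f(w))$, and then to $\mathfrak{M}_n = M(X \sqcup \lc \mathfrak{M}_{n-1}\rc)$ by the universal property of the free monoid on $X \sqcup \lc \mathfrak{M}_{n-1}\rc$. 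A routine induction shows compatibility with the system maps $i_{n-1,n} : \mathfrak{M}_{n-1}\hookrightarrow \mathfrak{M}_n$, so $\bar f$ descends to $\mathfrak{M}(X) = \dirlim \mathfrak{M}_n$. Uniqueness follows because the standard decomposition (\mref{eq:decom}) forces any operated monoid homomorphism out of $\mathfrak{M}(X)$ to be determined by its values on $X$.

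Second, for $(\calf_\ell(\widetilde{X}), B^+_\sigmaup)$, I would argue by induction on the total number of vertices. Every tree $T \in \calt_\ell(\widetilde{X})$ is either a single leaf $\bullet_x$ with $x \in X$, or of the form $B^+_\sigmaup(F)$ for some forest $F \in \calf_\ell(\widetilde{X})$; the latter subsumes $\bullet_\sigmaup = B^+_\sigmaup(\bfone)$ and any tree with an internal root. Given $(N,Q)$ and $f : X \to N$, set $\bar f(\bfone) = 1_N$, $\bar f(\bullet_x) = f(x)$ for $x \in X$, and recursively $\bar f(B^+_\sigmaup(F)) = Q(\bar f(F))$, then extend multiplicatively to all forests. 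A direct check shows that $\bar f$ is an operated monoid homomorphism extending $f$, and uniqueness again follows from the unique decomposition of each forest as a product of trees of the two types above.

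The hard part, such as it is, will be bookkeeping: on the bracketed-word side, verifying that the inductive construction of $\bar f$ is genuinely compatible with the direct system so that descent to the limit is justified; and on the tree side, correctly stating the grammar so that each tree decomposes uniquely as either a leaf $\bullet_x$ with $x\in X$ or as $B^+_\sigmaup$ applied to a determined subforest --- in particular, recognising that $\bullet_\sigmaup$ falls in the second case rather than the first. Once these points are in hand, the desired isomorphism $\Phi$ and its inverse drop out from the universal property applied twice, and their uniqueness is automatic.
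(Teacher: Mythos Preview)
Your proposal is correct and follows the standard argument: verify the universal property for each of the two operated monoids by induction on depth (for bracketed words) and on the number of vertices (for decorated forests), then deduce the isomorphism from uniqueness of free objects. The only subtleties --- compatibility of the stagewise maps with the direct system, and the case split ensuring that $\bullet_\sigmaup$ is handled as $B^+_\sigmaup(\bfone)$ rather than as a generator --- are exactly the ones you flag.

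There is nothing to compare against, however: the paper does not prove this proposition but simply imports it from~\mcite{ZGG} (and, for the bracketed-word half, implicitly from~\mcite{Gop}). Your sketch is essentially the argument one finds in those references, so you are reconstructing the cited proof rather than offering an alternative route.
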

Under this isomorphism, Reynolds words can be identified with a class of decorated rooted trees defined as follows.

\begin{defn}
A {\bf Reynolds forest} is a planar decorated rooted forest in $\calf_\ell(\widetilde{X})$ without the following ``crown" shaped subtrees, where $n\geq 2$:
$$
\setlength{\unitlength}{1mm}
\begin{picture}(10,4)(0,0)
\put(-3,0){$^{u_1}\bullet$}
\put(3,0){$\bullet^{u_2}$}
\put(8,0){$\cdots$}
\put(11,0){$^{u_{n-1}}\bullet$}
\put(19,0){$\bullet^{u_n}$}
\put(8,-8.5){$\bullet$}
\put(8.5,-8){$\line(1,1){9}$}
\put(9.5,-8){$\line(-1,1){9}$}
\put(8.5,-8){$\line(4,3){12}$}
\put(9.5,-8){$\line(-2,3){6}$}
\end{picture}
$$
\medskip

\noindent
Here each leaf dot $\bullet_{u_i}$ represents a grafting $B^+_\sigmaup(F)$, hence the subtree is called a {\bf super crown}. A {\bf decorated Reynolds forest} is a decorated planar rooted forest without decorated subtrees of the above form.
\end{defn}

Here are some examples of Reynolds trees decorated by the set $X=\{x\}$.
$$\setlength{\unitlength}{1mm}
\begin{picture}(50,10)(0,0)
\put(0,5){$\bullet^{x}$}
\put(0,0){$\bullet^{\sigmaup}$}
\put(0,-5){$\bullet^{\sigmaup}$}
\put(1,5.3){$\line(0,-1){10}$}
\quad\quad
\put(4,0),
\put(6,1.5){$^{x}\bullet$}
\put(11,1.5){$\bullet^{\sigmaup}$}
\put(9.4,-2.5){$\bullet_{\sigmaup}$}
\put(10.2,-2.3){$\line(1,3){1.4}$}
\put(10.3,-2.3){$\line(-1,3){1.4}$}
\put(14.2,0){$\bullet^{x}$}
\quad\qquad\quad
\put(16.5,0),
\put(19,2.5){$\bullet^{x}$}
\put(19,-2.5){$\bullet^{\sigmaup}$}
\put(20,-1){$\line(0,1){5}$}
\put(23,0){$\bullet^{x}$}
\qquad\quad
\put(26,0),
\put(30,0){$\bullet^{\sigmaup}$}
\put(34,2.5){$\bullet^{x}$}
\put(34,-2.5){$\bullet^{\sigmaup}$}
\put(35,-1){$\line(0,1){5}$}
\end{picture}
$$
\medskip

\noindent which represent Reynolds words $\lc\lc\lc x\rc\rc\rc, \lc x\lc \bfone\rc\rc, \lc x\rc x$ and $\lc \bfone\rc\lc x\rc$ respectively. Note that the second forest has a crown but not a super crown since the left branch is not a grafting.
On the other hand, the forests

$$\setlength{\unitlength}{1mm}
\begin{picture}(40,4)(0,0)
\put(3.5,1.5){$^{\sigmaup}\bullet$}
\put(9,1.5){$\bullet^{\sigmaup}$}
\put(7.4,-2.5){$\bullet_{\sigmaup}$}
\put(8.3,-2.3){$\line(1,3){1.4}$}
\put(8.3,-2){$\line(-1,3){1.3}$}
\put(12.2,0){$\bullet^{x}$}
\put(15,0),
\quad\qquad\quad
\put(18,1.5){$^{\sigmaup}\bullet$}
\put(24,1.5){$\bullet^{\sigmaup}$}
\put(22.4,-2.6){$\bullet_{\sigmaup}$}
\put(23.3,-2.3){$\line(1,3){1.5}$}
\put(23.3,-2.3){$\line(-1,3){1.4}$}
\put(18,6){$^{x}\bullet$}
\put(21,7){$\line(0,-1){5}$}
\end{picture}
$$
\medskip

\noindent
are not decorated Reynolds forests, corresponding to the bracketed words $\lc \lc \bfone\rc \lc \bfone\rc\rc x, \lc \lc x\rc \lc \bfone\rc \rc$.
\smallskip

Now we return to the discussion of Reynolds words. For ease of applications, we next give a recursive structure of the set of Reynolds words. First let
\begin{equation}
\frak R_{0}= M(X),\,\frak R'_{0}= M(X).
\mlabel{eq:Init}
\end{equation}
Suppose that $\frakR_n$ and $\frakR_n'$ have been defined for $n\geq 0$. With the notation
$$ S_{\geq 2} (Y):= \coprod_{m=2}^\infty Y^m$$
for any subset $Y\subseteq \frakM(X)$ and Cartesian power $Y^m$, we recursively define
\begin{equation}
\frak R_{n+1}: = M( X\sqcup \lc \frak R_{n}^{\prime} \rc), \,
\frak R_{n+1}': =\frakR_{n+1} \setminus S_{\geq 2} (\lc \frakR_n'\rc ) =\frakR_{n+1} \setminus S_{\geq 2} (\lc \frakR_n\rc ),
\mlabel{eq:Recu}
\end{equation}
where the last equation follows from the first equation.
The followings are some elementary properties of $\frak R_{n}$ and $\frak R_{n}'$.

\begin{prop}
Let $n\geq0$. Then
\begin{align}
\frak R_{n} \subseteq \frak R_{n+1},\,
\frak R'_{n} \subseteq \frak R'_{n+1}.
\mlabel{eq:IRR'}
\end{align}
\mlabel{prop:IRR'}
\end{prop}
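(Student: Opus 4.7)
The plan is to prove both inclusions simultaneously by induction on $n \geq 0$, exploiting the mutually recursive definitions in Eqs.~\eqref{eq:Init} and~\eqref{eq:Recu} together with the uniqueness of the standard decomposition~\eqref{eq:decom}.

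For the base case $n=0$, I would simply observe that $\frak R_0 = M(X) \subseteq M(X \sqcup \lc M(X)\rc) = \frak R_1$, and similarly that $\frak R'_0 = M(X) \subseteq \frak R'_1$ holds because any element of $M(X)$, written in its standard decomposition, consists only of letters from $X$, whereas every element of $S_{\geq 2}(\lc \frak R'_0\rc)$ begins with a bracketed letter $\lc u_1\rc$; hence $M(X)\cap S_{\geq 2}(\lc \frak R'_0\rc)=\emptyset$ and the inclusion $\frak R'_0 \subseteq \frak R_1 \setminus S_{\geq 2}(\lc\frak R'_0\rc) = \frak R'_1$ follows.

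For the inductive step, assume both $\frak R_n \subseteq \frak R_{n+1}$ and $\frak R'_n \subseteq \frak R'_{n+1}$. The first inclusion $\frak R_{n+1} \subseteq \frak R_{n+2}$ is immediate: from $\frak R'_n \subseteq \frak R'_{n+1}$ we get $X \sqcup \lc \frak R'_n\rc \subseteq X \sqcup \lc \frak R'_{n+1}\rc$, and applying the functor $M(-)$ preserves this inclusion. For $\frak R'_{n+1} \subseteq \frak R'_{n+2}$, the strategy is to take $w \in \frak R'_{n+1}$ (so $w \in \frak R_{n+1}$ and $w \notin S_{\geq 2}(\lc \frak R'_n \rc)$), note that $w \in \frak R_{n+2}$ by the first inclusion, and then verify that $w \notin S_{\geq 2}(\lc \frak R'_{n+1}\rc)$.

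The crux of the argument, and the only nontrivial point, is this last verification. Suppose for contradiction that $w = \lc u_1\rc \lc u_2\rc \cdots \lc u_m\rc$ with $m\geq 2$ and $u_i \in \frak R'_{n+1}$. Since $w$ also lies in $\frak R_{n+1} = M(X \sqcup \lc \frak R'_n\rc)$, its standard decomposition inside the ambient free operated monoid $\frak M(X) = M(X\sqcup \lc \frak M(X)\rc)$ is unique (by Eq.~\eqref{eq:Mon} and Eq.~\eqref{eq:decom}), so the bracketed factors $\lc u_i\rc$ forced by the two presentations must agree. In particular each $u_i$ must actually lie in $\frak R'_n$, placing $w$ in $S_{\geq 2}(\lc \frak R'_n\rc)$ and contradicting $w \in \frak R'_{n+1}$. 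Thus $w\in \frak R'_{n+2}$, completing the induction. The main obstacle is essentially bookkeeping: one must carefully invoke uniqueness of the standard decomposition to prevent the weaker condition $u_i \in \frak R'_{n+1}$ from allowing $w$ to escape $S_{\geq 2}(\lc \frak R'_n\rc)$ only to be captured by the larger set $S_{\geq 2}(\lc \frak R'_{n+1}\rc)$.
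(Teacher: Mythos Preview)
Your proof is correct and follows essentially the same approach as the paper. Your key contrapositive step---that if $w\in\frak R_{n+1}=M(X\sqcup\lc\frak R'_n\rc)$ has a standard decomposition $\lc u_1\rc\cdots\lc u_m\rc$ then necessarily each $u_i\in\frak R'_n$---is precisely the content of the paper's set identity $\frak R_{k}\cap S_{\geq 2}(\lc\frak R'_{k}\rc)=S_{\geq 2}(\lc\frak R'_{k-1}\rc)$ in Eq.~\eqref{eq:RR'}, from which the paper derives $\frak R'_k\subseteq\frak R'_{k+1}$ by a direct chain of containments rather than by contradiction.
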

\begin{proof}
We prove the inclusions by induction on $n\geq 0$. When $n = 0$,
by Eq.~(\mref{eq:Recu}) we have
$$\frak R_{0}=M(X)\subseteq M(X\cup\lc\frak R_{0})\rc)=\frak R_{1}\text{ and }\frak R_{0}'=M(X)\subseteq(M(X)\setminus S(\lc M(X)\rc))\sqcup \lc M(X)\rc=\frak R'_{1}.$$
For a fixed $k\geq0$, assume that Eq.~(\mref{eq:IRR'}) holds for $n\leq k$ and consider the case when $n=k+1$. By the induction hypothesis,
$$\frak R_{k} \subseteq \frak R_{k+1},\,\frak R'_{k} \subseteq \frak R'_{k+1}.$$
Thus
\begin{equation}
\lc\frak R'_{k}\rc\subseteq\lc\frak R'_{k+1}\rc.
\mlabel{eq:BR'}
\end{equation}
So we have
\begin{equation}
\frak R_{k+1} = M( X\sqcup \lc \frak R'_{k} \rc)\subseteq M( X\sqcup \lc \frak R'_{k+1}\rc)=\frak R_{k+2},
\mlabel{eq:Rr}
\end{equation}
completing the inductive proof of the first inclusion in Eq.~\eqref{eq:IRR'}.

Next from $\frakR_{k}=M(X\sqcup \lc \frakR_{k-1}'\rc)$, a bracket $\lc u\rc$ can appear in $\frakR_{k}$ only when $u\in \frakR_{k-1}'$. Thus \begin{equation}
\frak R_{k}\cap S(\lc\frak R_{k}'\rc)=S (\lc\frak R'_{k-1}\rc)
\mlabel{eq:Cap}
\end{equation}
and hence
\begin{equation}
\frak R_{k}\cap S_{\geq 2}(\lc\frak R_{k}'\rc)=S_{\geq 2} (\lc\frak R_{k-1}'\rc).
\mlabel{eq:RR'}
\end{equation}
Therefore,
\begin{align*}
\frak R'_{k+1}
&= \frak R_{k+1}\setminus S_{\geq 2} (\lc\frak R'_{k}\rc)\quad(\text{by Eq.~(\mref{eq:Recu})})\\
&\supseteq \frakR_k \setminus S_{\geq 2} (\lc \frakR_k'\rc)
\quad (\text{by the first inclusion in Eq.~\eqref{eq:IRR'}})\\
&= \frak R_{k}\setminus S_{\geq 2} (\lc\frak R_{k-1}'\rc) \quad(\text{by Eq.~(\mref{eq:RR'})})\\
&= \frakR_k'.
\end{align*}
This finishes the proof of the second inclusion in Eq.~\eqref{eq:IRR'}.
\end{proof}

From Proposition \mref{prop:IRR'}, there are two direct systems
$$\{\frak R_{n},\iota_{n,n+1}:\frak R_{n}\hookrightarrow\frak R_{n+1}\}^{\infty}_{n=0}\text{ and~ }\{\frak R'_{n},\iota_{n,n+1}:\frak R'_{n}\hookrightarrow\frak R'_{n+1}\}^{\infty}_{n=0},$$
where $\iota_{n,n+1}$ is the natural inclusion. Furthermore, for the direct limits, we have
\begin{equation}
\frak R:= \dirlim\frak R_{n}=\bigcup_{n\geq0}\frak R_{n},\,
\frak R':= \dirlim \frak R'_{n}=\bigcup_{n\geq0}\frak R'_{n}.
\mlabel{eq:Dirlim}
\end{equation}
By taking direct limit on both sides of Eq.~\eqref{eq:Recu}, we obtain
\begin{equation}
\frak R = M( X\sqcup \lfloor \frak R' \rfloor),\ \frak R'=\frak R\setminus S_{\geq 2} (\lc\frak R\rc).
\mlabel{eq:DirR}
\end{equation}

\subsection{Construction of the free Reynolds algebra on a set $X$}
\mlabel{ss:free}

We will construct the free Reynolds algebra on a set $X$ by equipping the free module $\bfk \frak R$ spanned by the set $\frak R=\frak R(X)$ of Reynolds words with a suitably defined multiplication and a linear operator. The multiplication is easy to define. But the linear operator with the desired properties is not straightforward and will take up the rest of the discussion. We will assume that $\bfk$ is a field with characteristic zero in the following.

Applying the free monoid structure on $X\sqcup \lc \frakR'\rc$ in~Eq.(\mref{eq:DirR}) where the multiplication is the concatenation, we equip the free module $\bfk\frak R$ with the free noncommutative polynomial algebra
$$\bfk\frakR=\bfk\langle X\sqcup \lc \frakR'\rc\rangle.$$

Next we define a linear operator $P:\bfk\frak R\rightarrow \bfk\frakR$ and show that it is a Reynolds operator. Denote $$\frakR'':=\frakR\setminus \frakR',$$
so that $\frakR=\frakR'\sqcup \frakR''$ and $\frakR''\subseteq S_{\geq 2}(\lc \frakR\rc)$. For $r$ in $\frak R$, we split the definition of $P(r)$ into two cases.
\smallskip

\noindent{\bf Case 1}. If $r\in\frak R'$, then $\lc r\rc$ is in $\frakR$ and we define
\begin{equation}
P(r):=\lc r\rc.
\mlabel{eq:R'}
\end{equation}

\noindent{\bf Case 2}. If $r\in\frak R''$, then we have $r=\lc s_1\rc \lc s_2\rc\cdots \lc s_m\rc, s_i\in \frakR, 1\leq i\leq m, m\geq 2$. In fact, $s_i$ are in $\frakR'$. By extracting the largest number of brackets from $s_i, 1\leq i\leq m$, we can uniquely write
$$r =\lc r_{1}\rc^{(n_{1})}\cdots \lc r_m\rc^{(n_m)},$$
where $r_{j}\in\frak R'\setminus \lc \frak R\rc, n_{j}\geq1$ for $1\leq j\leq m$. With this notation, we define $P(r)$ by induction on $n:=n_{1}+\cdots+n_{m}$. Then $n\geq m\geq2$. When $n=m$, then $n_{1}=\cdots=n_{m}=1$. So $r =\prod_{j=1}^{m}\lc r_{j}\rc$ with $r_{1},\cdots, r_{m}\in\frak R'\setminus\lc \frak R'\rc\subset\frak R'$. Then for all $1\leq i \leq n$, we have
$$\lc r_{1}\rc\cdots\lc r_{i-1}\rc r_{i}\lc r_{i+1}\rc\cdots\lc r_{m}\rc\in\frak R'.$$
Then we define
\begin{equation}
P(r):=\frac{1}{m-1}\left(\sum^{m}_{i=1}\lc\lc r_{1}\rc\cdots\lc r_{i-1}\rc r_{i}\lc r_{i+1}\rc\cdots\lc r_{m}\rc\rc-r\right).
\mlabel{eq:R''0}
\end{equation}

For $k\geq m\geq2$, assume that $P(r)$ have been defined when $m\leq n\leq k$ and consider $P(r)$ with $n=n_{1}+\cdots+n_{m}=k+1$. Then define
\begin{equation}
P(r):=\frac{1}{m-1}\left(\sum^{m}_{i=1}P(\lc r_{1}\rc^{(n_{1})}\cdots\lc r_{i-1}\rc^{(n_{i-1})} \lc r_{i}\rc^{(n_{i}-1)}\lc r_{i+1}\rc^{(n_{i+1})}\cdots\lc r_{m}\rc^{(n_{m})})-r\right).
\mlabel{eq:R''}
\end{equation}
Since $$n_{1}+\cdots+n_{i-1}+(n_{i}-1)+n_{i+1}+\cdots+n_{m}=k,$$
$P(\lc r_{1}\rc^{(n_{1})}\cdots\lc r_{i-1}\rc^{(n_{i-1})} \lc r_{i}\rc^{(n_{i}-1)}\lc r_{i+1}\rc^{(n_{i+1})}\cdots\lc r_{m}\rc^{(n_{m})})$ is well defined either by Case 1 (when $n_i-1=0$) or by the induction hypothesis. Hence $P(r)$ is well-defined.
We then extend $P$ to $\bfk\frak R $ by linearity.

In summary, we have following algorithm for the definition of $P$:

\begin{algo}
{\bf (Algorithm of $P$)} Let $r$ be in $\frakR$.
\begin{enumerate}
\item
If $r$ is in $\frakR'$, then define $P(r):=\lc r\rc$ and stop;
\item
If $r$ is not in $\frakR'$, then apply Eq.~\eqref{eq:R''0} to $r$ as many times as needed, until the resulting expression $\tilde{r}$ is in $\bfk \frakR'$. Then invoke Eq.~\eqref{eq:R'}, namely with all $P(u)$ replaced by $\lc u \rc$.
\end{enumerate}
\end{algo}

As an illustration of this procedure, we give the following example. Comparing with Example~\mref{ex:R}, we see that we have defined $P$ by essentially axiomatizing Proposition~\mref{prop:R}.

\begin{exam}
Let $x, y,z\in X$ and $r=\lc x\rc^{(2)}\lc y\rc^{(1)}\lc z\rc^{(1)}=\lc\lc x\rc \rc \lc y\rc \lc z\rc\in\frak R''$. Then
\begin{align*}
P(r)&=\frac{1}{2}P( \lc x\rc \lc y\rc\lc z\rc )+\frac{1}{2}P(\lc\lc x\rc\rc y\lc z\rc)+\frac{1}{2}P(\lc\lc x\rc\rc\lc y\rc z)-\frac{1}{2}r\quad(\text{by Eq.~(\mref{eq:R''})})\\
&=\frac{1}{4}P(  x \lc y\rc\lc z\rc )+\frac{1}{4}P( \lc x\rc  y\lc z\rc )+\frac{1}{4}P( \lc x\rc \lc y\rc z )-\frac{1}{4} \lc x\rc \lc y\rc\lc z\rc \\
&\ \ \ +\frac{1}{2}P(\lc\lc x\rc\rc y\lc z\rc)+\frac{1}{2}P(\lc\lc x\rc\rc\lc y\rc z)-\frac{1}{2}r\quad(\text{by Eq.~(\mref{eq:R''}) applied to the first term})\\
&=\frac{1}{4}\lc x \lc y\rc\lc z\rc \rc+\frac{1}{4}\lc \lc x\rc  y\lc z\rc \rc+\frac{1}{4}\lc \lc x\rc \lc y\rc z \rc-\frac{1}{4} \lc x\rc \lc y\rc\lc z\rc\\
&\ \ \ +\frac{1}{2}\lc\lc\lc x\rc\rc y\lc z\rc\rc+\frac{1}{2}\lc\lc\lc x\rc\rc\lc y\rc z\rc-\frac{1}{2}r\quad(\text{by Eq.~(\mref{eq:R'})}).
\end{align*}
\end{exam}

\begin{prop}
Let $\bfk$ be a field with characteristic zero. With the concatenation multiplication and with the operator $P$ defined by Eqs.~$($\mref{eq:R''}$)$ and $($\mref{eq:R'}$)$, the pair $(\bfk\frak R, P)$ is a Reynolds algebra.
\mlabel{prop:RA}
\end{prop}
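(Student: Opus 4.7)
Since $\frak R = M(X\sqcup \lc \frak R'\rc)$ is a free monoid by Eq.~(\ref{eq:DirR}), $\bfk\frak R$ is an associative $\bfk$-algebra under concatenation; this gives the algebra structure of the proposition. The substance of the claim is the Reynolds identity, which by bilinearity of the map $(u,v)\mapsto P(u)P(v)-P(uP(v))-P(P(u)v)+P(P(u)P(v))$ need only be verified on pairs $u,v$ drawn from the basis $\frak R$.

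The plan is to establish, for $u_1,\ldots,u_m \in \frak R$ and $m\geq 2$, the stronger multi-variant identity of Proposition~\ref{prop:R},
\begin{equation*}
(m-1)\, P\Bigl(\prod_{i=1}^m P(u_i)\Bigr) = \sum_{i=1}^m P\bigl(P(u_1)\cdots u_i \cdots P(u_m)\bigr) - \prod_{i=1}^m P(u_i),
\end{equation*}
whose $m=2$ case is exactly the Reynolds identity. The key observation is that when every $u_i$ lies in $\frak R'$, one can write uniquely $u_i = \lc r_i\rc^{(n_i-1)}$ with $r_i \in \frak R'\setminus \lc \frak R\rc$ and $n_i \geq 1$; then $P(u_i)=\lc u_i\rc=\lc r_i\rc^{(n_i)}$ by Eq.~(\ref{eq:R'}), so the product $\prod_i P(u_i) = \lc r_1\rc^{(n_1)}\cdots \lc r_m\rc^{(n_m)}$ lies in $\frak R''$ with precisely the canonical form to which the defining recursion Eqs.~(\ref{eq:R''0})--(\ref{eq:R''}) applies. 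Unfolding the definition shows the multi-variant identity in this case by direct inspection, with the base $n_1=\cdots=n_m=1$ matching Eq.~(\ref{eq:R''0}) and the inductive step on $n_1+\cdots+n_m$ matching Eq.~(\ref{eq:R''}).

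To extend to arbitrary $u_1,\ldots,u_m \in \frak R$ --- allowing some $u_i$ in $\frak R''$ --- I would proceed by strong induction on a combined bracket-complexity measure. When some $u_i \in \frak R''$ is decomposed as $\lc r_{i,1}\rc^{(n_{i,1})}\cdots \lc r_{i,p}\rc^{(n_{i,p})}$, its definition
\begin{equation*}
P(u_i) = \tfrac{1}{p-1}\Bigl(\sum_{j=1}^p P\bigl(\lc r_{i,1}\rc^{(n_{i,1})}\cdots \lc r_{i,j}\rc^{(n_{i,j}-1)}\cdots \lc r_{i,p}\rc^{(n_{i,p})}\bigr) - u_i\Bigr)
\end{equation*}
expresses $P(u_i)$ as a linear combination of $P$-values on words with strictly smaller bracket complexity, plus a scalar multiple of $u_i$ itself. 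Substituting this expansion into both sides of the multi-variant identity, expanding by bilinearity, and regrouping, the verification reduces to instances of either strictly smaller complexity (handled by the induction hypothesis) or entirely $\frak R'$-arguments (handled by the previous paragraph). Specialising $m=2$ then yields the Reynolds identity on $\bfk\frak R$.

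The principal obstacle will be the combinatorial bookkeeping of these substitutions: each application of the recursion spawns $p$ new $P$-terms, and the naked terms $-u_i$ arising from the expansion of $P(u_i)$ must be cancelled against their images on the opposite side of the identity. The proof works because the coefficient $\tfrac{1}{p-1}$ and the symmetric summation structure of Eq.~(\ref{eq:R''}) were engineered to mirror exactly the combinatorics of Proposition~\ref{prop:R}; the delicate point is confirming that this mirroring is consistent no matter how one chooses to decompose a given product of brackets, which is where the bulk of the technical work lies.
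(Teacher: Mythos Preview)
Your plan shares the paper's central insight: when every $u_i\in\frak R'$, the multi-variant identity is literally the recursive definition of $P$, so that case is free. Where you diverge is in organization. The paper does not attempt the full multi-variant identity for arbitrary $u_i\in\frak R$; instead it isolates as a separate lemma precisely the case of your identity where all but one argument lies in $\frak R'$ (stated as: for $r=\lc r_1\rc^{(n_1)}\cdots\lc r_{m_r}\rc^{(n_{m_r})}\in\frak R''$ and $s\in\frak R$ arbitrary, $m_r\,P(rP(s))=\sum_iP(r_i^*P(s))-rP(s)+P(rs)$, and symmetrically). With that lemma in hand, the paper proves the $m=2$ Reynolds identity directly by induction on $\deg_P(r)+\deg_P(s)$, splitting into the four cases $r,s\in\frak R'\sqcup\frak R''$ and expanding $P(r)$ or $P(s)$ via Eq.~(\ref{eq:R''}).

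Your route is viable but harder to make precise than you suggest. The substitution you describe for $P(u_i)$ when $u_i\in\frak R''$ produces, among other things, the naked term $u_i=\prod_{k=1}^p P(v_{i,k})$ with $v_{i,k}\in\frak R'$; plugging this into $P\big(\prod_j P(u_j)\big)$ yields $P$ applied to a product of $m+p-1$ factors of the form $P(\cdot)$. So your induction cannot fix $m$: it must range over the full family of multi-variant identities simultaneously, with a complexity measure (such as the total number of brackets across all arguments) that strictly decreases under both this unpacking and the replacement $u_i\rightsquigarrow u_{i,j}^*$. That can be made to work, but the bookkeeping you flag as ``the principal obstacle'' is genuinely where the proof lives, and it is not obviously lighter than what the paper does. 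The paper's choice to prove only the restricted lemma and then a focused $m=2$ induction buys a cleaner inductive structure, at the cost of a longer case analysis; your route is conceptually more uniform but forces you to control a moving $m$.
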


\begin{proof}

Since the concatenation multiplication is associative, we only need to verify the Reynolds identity:
\begin{equation}
P\bigg(P(r)P(s)\bigg)=P\bigg(P(r)s\bigg)+P\bigg(rP(s)\bigg)-P(r)P(s) \tforall r, s\in \bfk\frakR.
\mlabel{eq:ROP1}
\end{equation}
We first introduce a preparational lemma.

Let $r =\lc r_{1}\rc^{(n_{1})} \cdots \lc r_{m_{r}}\rc^{(n_{m_{r}})}, s=\lc s_{1}\rc^{(l_{1})} \cdots \lc s_{m_{s}}\rc^{(l_{m_{s}})}$ be in $\frak R''$ with $r_{i}, s_{j}\in\frak R'\setminus\lc\frak R'\rc, n_{i}\geq 1, l_{j}\geq 1$, where $1\leq i\leq m_{r},1\leq j\leq m_{s}$. To simplify notations, we will use the abbreviations
\begin{align*}
r^{\ast}_{i}:=r^\ast_{m_r,i}&:=\lc r_{1}\rc^{(n_{1})}\cdots\lc r_{i-1}\rc^{(n_{i-1})} \lc r_{i}\rc^{(n_{i}-1)}\lc r_{i+1}\rc^{(n_{i+1})}\cdots\lc r_{m_{r}}\rc^{(n_{m_{r}})},\\
s^{\ast}_{j}:=s^\ast_{m_s,j}&:=\lc s_{1}\rc^{(l_{1})}\cdots\lc s_{j-1}\rc^{(l_{j-1})} \lc s_{j}\rc^{(l_{j}-1)}\lc s_{j+1}\rc^{(l_{j+1})}\cdots\lc s_{m_{s}}\rc^{(l_{m_{s}})}.
\end{align*}

\begin{lemma}
Let $r, s\in \frakR=\frakR' \sqcup \frakR''$ be Reynolds words. Suppose that $r$ is in $\frakR''$, so $r =\lc r_{1}\rc^{(n_{1})} \cdots \lc r_{m_{r}}\rc^{(n_{m_{r}})}\in\frak R''$ with $r_{i}\in\frak R'\setminus\lc\frak R'\rc, n_{i}\geq1$, where $1\leq i\leq m_{r}$ and $m_r\geq 2$. Then
\begin{equation}
m_{r}P\bigg(r P(s)\bigg)
=\sum_{i=1}^{m_{r}}P\bigg(r^{\ast}_{i}P(s)\bigg)-r P(s)+P(r s).
\mlabel{eq:SE'}
\end{equation}
Similarly, suppose that $s$ is in $\frakR''$, so $s=\lc s_{1}\rc^{(l_{1})} \cdots \lc s_{m_{s}}\rc^{(l_{m_{s}})}\in\frak R''$ with $s_{j}\in\frak R'\setminus\lc\frak R'\rc, l_{j}\geq1$, where $1\leq j\leq m_{s}$ and $m_s\geq 2$. Then
\begin{equation}
m_{s}P\bigg(P(r)s\bigg)
=\sum_{j=1}^{m_{s}}P\bigg(P(r)s^{\ast}_{j}\bigg)-P(r)s+P(r s).
\label{eq:SE}
\end{equation}
\mlabel{lem:C}
\end{lemma}
We now continue with the proof of Proposition~\mref{prop:RA} assuming Lemma~\mref{lem:C} whose proof is postponed to Section~\mref{ss:lemma}.

Since $P$ is linear, we just need to verify that Eq.~(\mref{eq:ROP1}) holds for $r,s\in \frak R=\frak R'\sqcup\frak R''$. Let $m:=\deg_{P}(r)$ and $n:=\deg_{P}(s)$. Then $m\geq0$ and $n\geq0$. We verify Eq.~(\mref{eq:ROP1}) by induction on $p:=m+n$. When $p=0$, we have $m=n=0$, that is, $r,s\in\frak R_{0}=M(X)$. By Eq.~(\mref{eq:R'}), we obtain
$$P(r)=\lc r\rc\text{ and }P(s)=\lc s\rc.$$
Thus
\begin{align*}
P\bigg(P(r)P(s)\bigg)
&=P\bigg(\lc r\rc\lc s\rc\bigg)\\
&=P(\lc r\rc s)+P( r\lc s\rc)-\lc r\rc\lc s\rc\quad(\text{by Eq. }(\mref{eq:R''}))\\
&=\lc\lc r\rc s\rc+\lc r\lc s\rc\rc-\lc r\rc\lc s\rc\\
&=P\bigg(P(r)s\bigg)+P\bigg(rP(s)\bigg)-P(r)P(s).\quad(\text{by Eq. }(\mref{eq:R'}))
\end{align*}

For any given $k\geq0$, assume that Eq.~(\mref{eq:ROP1}) holds for $m+n=p\leq k$, and consider the case when $p=k+1$. There are four cases to consider depending on $r$ and/or $s$ are in $\frakR'$ or $\frakR''$.

\noindent{\bf Case 1.} $r,s\in\frak R'$: Applying Eq.~(\mref{eq:R'}), we have $P(r)=\lc r\rc, P(s)=\lc s\rc$. Thus Eq.~(\mref{eq:ROP1}) is obtained by the same argument as above, applying Eqs. (\mref{eq:R'}) and (\mref{eq:R''}).
\smallskip

\noindent{\bf Case 2.} $r =\lc r_{1}\rc^{(n_{1})} \cdots \lc r_{m_{r}}\rc^{(n_{m_{r}})}\in\frak R'', s\in\frak R'$ where $r_{i}\in\frak R'\setminus\lc\frak R'\rc, n_{i}\geq2$ for $1\leq i\leq m_{r}, m_r\geq 2$: By Eqs. (\mref{eq:R'}) and (\mref{eq:R''}), we obtain
$$
P(r)=\frac{1}{m_{r}-1}\left(\sum^{m_r}_{i=1}P(r^{\ast}_{i})-r\right),\quad P(s)=\lc s\rc.
$$
From these equations, we obtain
\allowdisplaybreaks{
\begin{align*}
P\bigg(P(r)P(s)\bigg)
&=\frac{1}{m_{r}-1}\sum_{i=1}^{m_{r}}P\bigg(P(r^{\ast}_{i})P(s)\bigg)
-P(r P(s))\bigg)
\quad(\text{by linearity of } P)\\
&=\frac{1}{m_{r}-1}\left(\sum_{i=1}^{m_{r}}P\bigg(P(r^{\ast}_{i})P(s)\bigg)
-P(r\lc s\rc)\right)\\
&=\frac{1}{m_{r}-1}\sum_{i=1}^{m_{r}}P(P(r^{\ast}_{i})s)
+\frac{1}{m_{r}-1}\sum_{i=1}^{m_{r}}P(r^{\ast}_{i}\lc s\rc)
-\frac{1}{m_{r}-1}\sum_{i=1}^{m_{r}}P(r^{\ast}_{i})\lc s\rc
-\frac{1}{m_{r}-1}P(r\lc s\rc)\\
&\hspace{2cm}(\text{by the induction hypothesis })
\end{align*}
}
and
\allowdisplaybreaks{
\begin{align*}
&P\bigg(P(r)s\bigg)+P\bigg(rP(s)\bigg)-P(r)P(s)\\
&=\frac{1}{m_{r}-1}\sum_{i=1}^{m_{r}}P\bigg(P(r^{\ast}_{i}) s\bigg)
-\frac{1}{m_{r}-1}P(r s)+P\bigg(r\lc s\rc\bigg)\\
&~~~~~~~~-\frac{1}{m_{r}-1}\sum_{i=1}^{m_{r}}P(r^{\ast}_{i})\lc s\rc
+\frac{1}{m_{r}-1}r \lc s\rc
\quad(\text{by Eq. }(\mref{eq:R''})).
\end{align*}}

Taking the difference of the two equations and simplifying, we obtain

\begin{align*}
& P\bigg(P(r)P(s)\bigg) - P\bigg(P(r)s\bigg)-P\bigg(rP(s)\bigg)+P(r)P(s) \\
&=\frac{1}{m_{r}-1}\sum_{i=1}^{m_{r}}P(r^{\ast}_{i}\lc s\rc)
-\frac{m_{r}}{m_{r}-1}P(r\lc s\rc)-\frac{1}{m_{r}-1}r \lc s\rc +\frac{1}{m_{r}-1}P(r s),
\end{align*}
which vanishes by Lemma~\mref{lem:C}.
So Eq.~(\mref{eq:ROP1}) holds in this case.
\smallskip

\noindent{\bf Case 3.} $r\in\frak R', s=\lc s_{1}\rc^{(l_{1})} \cdots \lc s_{m_{s}}\rc^{(l_{m_{s}})}\in\frak R''$ where $s_{j}\in\frak R'\setminus\lc\frak R'\rc, l_{j}\geq2$ for $1\leq j\leq m_{s}$: It follows from a proof similar to Case 2, by applying Lemma~\mref{lem:C}.
\smallskip

\noindent{\bf Case 4.} $r =\lc r_{1}\rc^{(n_{1})} \cdots \lc r_{m_{r}}\rc^{(n_{m_{r}})}, s=\lc s_{1}\rc^{(l_{1})} \cdots \lc s_{m_{s}}\rc^{(l_{m_{s}})}\in\frak R''$ where $r_{i}, s_{j}\in\frak R'\setminus\lc\frak R'\rc, n_{i}\geq2, l_{j}\geq2$ for $1\leq i\leq m_{r},1\leq j\leq m_{s}$: By Eq.~(\mref{eq:R''}), we obtain
$$
P(r)=\frac{1}{m_{r}-1}\left(\sum_{i=1}^{m_{r}}P(r^*_{i})-r\right), \quad
P(s)=\frac{1}{m_{s}-1}\left(\sum_{j=1}^{m_{s}}P(s^*_{j})-s\right).
$$
Consequently,
\begin{align*}
P\bigg(P(r)P(s)\bigg)
&=\frac{1}{m_{r}-1}\frac{1}{m_{s}-1}\sum_{i=1}^{m_{r}}\sum_{j=1}^{m_{s}}P\bigg(
P(r^{\ast}_{i})P(s^{\ast}_{j})\bigg)+\frac{1}{m_{r}-1}\frac{1}{m_{s}-1}P(rs)\\
&~~~~~~~~-\frac{1}{m_{r}-1}\frac{1}{m_{s}-1}\sum_{i=1}^{m_{r}}P\bigg(P(r^{\ast}_{i})s\bigg)-\frac{1}{m_{r}-1}
\frac{1}{m_{s}-1}\sum_{j=1}^{m_{s}}P\bigg(r P(s^{\ast}_{j})\bigg)\\
&\hspace{2cm}(\text{by the linearity of } P)\\
&=\frac{1}{m_{r}-1}\frac{1}{m_{s}-1}\sum_{i=1}^{m_{r}}\sum_{j=1}^{m_{s}}P\bigg(r^{\ast}_{i}P(s^{\ast}_{j})\bigg)
+\frac{1}{m_{r}-1}\frac{1}{m_{s}-1}\sum_{i=1}^{m_{r}}\sum_{j=1}^{m_{s}}P\bigg(P(r^{\ast}_{i})s^{\ast}_{j}\bigg)\\
&~~~~~~~~-\frac{1}{m_{r}-1}\frac{1}{m_{s}-1}\sum_{i=1}^{m_{r}}\sum_{j=1}^{m_{s}}P(r^{\ast}_{i})
P(s^{\ast}_{j})
-\frac{1}{m_{r}-1}\frac{1}{m_{s}-1}\sum_{i=1}^{m_{r}}P\bigg(P(r^{\ast}_{i})s\bigg)\\
&~~~~~~~-\frac{1}{m_{r}-1}\frac{1}{m_{s}-1}\sum_{j=1}^{m_{s}}P\bigg(r P(s^{\ast}_{j})\bigg)
+\frac{1}{m_{r}-1}\frac{1}{m_{s}-1}P(rs)\\
&\hspace{2cm}(\text{by the induction hypothesis})
\end{align*}
and
\begin{align*}
&P\bigg(P(r)s\bigg)+P\bigg(rP(s)\bigg)-P(r)P(s)\\
&=\frac{1}{m_{r}-1}\sum_{i=1}^{m_{r}}P\bigg(P(r^{\ast}_{i}) s\bigg)
+\frac{1}{m_{s}-1}\sum_{j=1}^{m_{s}}P\bigg(r P(s^{\ast}_{j})\bigg)
-\frac{m_{r}+m_{s}-2}{(m_{s}-1)(m_{r}-1)}P(r s)\\
&~~~~~~~~-\frac{1}{m_{r}-1}\frac{1}{m_{s}-1}\sum_{i=1}^{m_{r}}\sum_{j=1}^{m_{s}}P(r^{\ast}_{i})P(s^{\ast}_{j})
-\frac{1}{m_{r}-1}\frac{1}{m_{s}-1}rs\\
&~~~~~~~~+\frac{1}{m_{r}-1}\frac{1}{m_{s}-1}\sum_{i=1}^{m_{r}}P(r^{\ast}_{i})s
+\frac{1}{m_{r}-1}\frac{1}{m_{s}-1}\sum_{j=1}^{m_{s}}r P(s^{\ast}_{j}).
\end{align*}

Thus in order for the Reynolds identity in Eq.~\eqref{eq:ROP1} to hold, we only need to show that the right hand sides of the above two equations are the same. Equating them, clearing the denominators and combining similar terms, we see that we only need to verify

\begin{eqnarray*}
&&\sum_{i=1}^{m_{r}}\sum_{j=1}^{m_{s}}P\bigg(r^{\ast}_{i}P(s^{\ast}_{j})\bigg)
+\sum_{i=1}^{m_{r}}\sum_{j=1}^{m_{s}}P\bigg(P(r^{\ast}_{i})s^{\ast}_{j}\bigg)
-\sum_{i=1}^{m_{r}}P\bigg(P(r^{\ast}_{i})s\bigg)
-\sum_{j=1}^{m_{s}}P\bigg(r P(s^{\ast}_{j})\bigg)
+(m_{r}+m_{s}-1)P(r s)\\
&=&m_s\sum_{i=1}^{m_{r}}P\bigg(P(r^{\ast}_{i}) s\bigg)
+m_r\sum_{j=1}^{m_{s}}P\bigg(r P(s^{\ast}_{j})\bigg)
+\sum_{j=1}^{m_{s}}r P(s^{\ast}_{j})
+\sum_{i=1}^{m_{r}}P(r^{\ast}_{i})s
-rs.
\end{eqnarray*}
Applying Eq.~(\mref{eq:R''}) to $(m_r+m_s-1)P(rs)$ and applying Eqs.~(\mref{eq:SE}) and (\mref{eq:SE'}) in Lemma~\mref{lem:C} to $m_s\sum_{i=1}^{m_{r}}P\big(P(r^{\ast}_{i}) s\big)$
and $m_r\sum_{j=1}^{m_{s}}P\big(r P(s^{\ast}_{j})\big)$ respectively, we see that the equation indeed holds. This completes the inductive proof of Proposition~\mref{prop:RA}.
\end{proof}

We next verify the universal property of the Reynolds algebra $(\bfk\frakR,P)$ in Proposition~\mref{prop:RA}. As noted in the Introduction, this provides a solution to Question~\ref{qu:birk} posted by G. Birkhoff~\cite{Bi}.

\begin{theorem}
Let $\bfk$ be a field with characteristic zero and let $X$ be a set. The Reynolds algebra $(\bfk\mathfrak R, P)$ in Proposition~\mref{prop:RA}, together with the map $i: X\hookrightarrow\bfk\frak R$ defined in Eq.~(\mref{eq:IF}), is the free Reynolds algebra on the set $X$, characterized by the universal property: for any Reynolds algebra $A$ and any set map $f:X\rightarrow A$, there exists a unique Reynolds algebra homomorphism $\bar{f}: \bfk\frak R\rightarrow A$ such that $\bar{f}\circ i=f$, that is, the following diagram commutes
$$
\xymatrix{
  X \ar[dr]_{f} \ar@{^{(}->}[r]^{i}
  & \bfk\frak R \ar@{.>}[d]^{\bar{f}}  \\
  & A }
$$\mlabel{thm:main}
\end{theorem}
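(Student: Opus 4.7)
The construction of $\bar f$ relies on the identity $\frakR = M(X\sqcup\lfloor\frakR'\rfloor)$ from Eq.~\eqref{eq:DirR}, which realizes $\bfk\frakR = \bfk\langle X\sqcup\lfloor\frakR'\rfloor\rangle$ as the free associative algebra on the set $X\sqcup\lfloor\frakR'\rfloor$. Thus to specify an algebra homomorphism $\bar f:\bfk\frakR\to A$ it suffices to fix its values on these generators. Set $\bar f(x):=f(x)$ for $x\in X$, and define $\bar f(\lfloor r\rfloor):=P_A(\bar f(r))$ for $r\in\frakR'$ by recursion on $\dep(r)$, where $P_A$ denotes the Reynolds operator on $A$; the recursion is well-founded because the standard decomposition of any $r\in\frakR'$ involves only bracketed factors $\lfloor s\rfloor$ with $\dep(s)<\dep(r)$. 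This produces a unital algebra homomorphism $\bar f:\bfk\frakR\to A$ with $\bar f\circ i=f$.

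The core of the argument is to show $\bar f\circ P=P_A\circ\bar f$. By linearity this can be checked on Reynolds words $r\in\frakR=\frakR'\sqcup\frakR''$. When $r\in\frakR'$, Eq.~\eqref{eq:R'} together with the definition of $\bar f$ on $\lfloor r\rfloor$ gives the intertwining immediately. When $r=\lfloor r_1\rfloor^{(n_1)}\cdots\lfloor r_m\rfloor^{(n_m)}\in\frakR''$ with $r_i\in\frakR'\setminus\lfloor\frakR'\rfloor$, induct on $n:=n_1+\cdots+n_m\geq m\geq 2$. In the base case $n=m$, applying $\bar f$ to Eq.~\eqref{eq:R''0} and writing $u_i:=\bar f(r_i)$ produces
\begin{align*}
\bar f(P(r))=\frac{1}{m-1}\left(\sum_{i=1}^m P_A\bigl(P_A(u_1)\cdots P_A(u_{i-1})\,u_i\,P_A(u_{i+1})\cdots P_A(u_m)\bigr)-\prod_{i=1}^m P_A(u_i)\right),
\end{align*}
where each outer bracket legitimately translates to $P_A$ applied to $\bar f(\cdot)$ because $\lfloor r_1\rfloor\cdots r_i\cdots\lfloor r_m\rfloor\in\frakR'$. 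By the multi-variant Reynolds identity in $A$ (Proposition~\mref{prop:R}) this collapses to $P_A\bigl(\prod_{i=1}^m P_A(u_i)\bigr)=P_A(\bar f(r))$. For the inductive step $n>m$, apply $\bar f$ to Eq.~\eqref{eq:R''}; each inner argument has exponent sum $n-1$, so the induction hypothesis commutes $\bar f$ past the inner $P$'s, and Proposition~\mref{prop:R} in $A$ again reduces the expression to $P_A(\bar f(r))$.

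For uniqueness, let $\bar g:(\bfk\frakR,P)\to(A,P_A)$ be any Reynolds algebra homomorphism extending $f$. Then $\bar g$ agrees with $\bar f$ on $X$, hence on $\frakR_0=M(X)$ by multiplicativity. Induct on $\dep(w)$ for $w\in\frakR$: decomposing a word $w$ of positive depth multiplicatively into generators in $X\sqcup\lfloor\frakR'\rfloor$, it suffices to check the factors $\lfloor r\rfloor$ for $r\in\frakR'$. Since Eq.~\eqref{eq:R'} identifies $\lfloor r\rfloor$ with $P(r)$, compatibility of $\bar g$ with the Reynolds operators forces $\bar g(\lfloor r\rfloor)=P_A(\bar g(r))$, and $\dep(r)<\dep(\lfloor r\rfloor)$ together with the induction hypothesis gives $\bar g(\lfloor r\rfloor)=P_A(\bar f(r))=\bar f(\lfloor r\rfloor)$. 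The main obstacle throughout is the intertwining verification on $\frakR''$ carried out in the preceding paragraph; its success hinges on the fact that the recursive formulas \eqref{eq:R''0} and \eqref{eq:R''} defining $P$ on $\frakR''$ were engineered precisely to axiomatize Proposition~\mref{prop:R}, so that once $\bar f$ is pushed past the outermost brackets, the multi-variant Reynolds identity in $A$ closes the calculation cleanly.
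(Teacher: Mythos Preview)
Your proof is correct and follows essentially the same approach as the paper: construct $\bar f$ recursively on the generating set $X\sqcup\lfloor\frakR'\rfloor$, verify the intertwining $\bar f\circ P=P_A\circ\bar f$ on $\frakR''$ by induction on $n=n_1+\cdots+n_m$ using the multi-variant Reynolds identity (Proposition~\ref{prop:R}), and establish uniqueness by depth induction. The only cosmetic difference is that the paper packages the recursive construction of $\bar f$ as a direct limit of maps $\bar f_n:\bfk\frakR_n\to A$ with an explicit compatibility check, whereas you invoke the global decomposition $\frakR=M(X\sqcup\lfloor\frakR'\rfloor)$ directly; the underlying argument is the same.
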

\begin{proof}
Let $(A,Q)$ be a Reynolds algebra and let $f:X\to A$ be a set map. We will construct an algebra homomorphism $\bar{f}:\bfk\frakR\to A$ as the direct limit of a direct system $\bar{f}_n:\bfk \frakR_n\to A, n\geq 0$.

To begin with, by the universal property of $\bfk\frak R_{0} = \bfk M(X)$ as the free algebra on $X$, we obtain an algebra homomorphism $\bar{f}_{0}: \bfk\frak R_{0}\rightarrow A$ such that the following diagram commutes
$$
\xymatrix{
  X \ar[dr]_{f} \ar@{^{(}->}[r]^{i_{0}}
  & \bfk\mathfrak R_{0} \ar@{.>}[d]^{\bar{f}_{0}}  \\
  & A }
$$

Then define a map $f_1$ by
$$
 f_{1} : X \sqcup\lc\frak R'_{0}\rc \longrightarrow A,
\quad \left\{\begin{array}{ll} x\mapsto f(x),
&x\in X, \\
\lc x'_{0}\rc\mapsto Q(\bar{f}_{0}(x'_{0})),
&x'_{0}\in \frak R'_{0}.
\end{array}\right.
$$
By the universal property of $\bfk\frak R_{1} =\bfk M(X\sqcup\lc\frak R'_{0}\rc)$ as the free algebra on $X\sqcup\lc\frak R'_{0}\rc$, we obtain an algebra homomorphism $\bar{f}_{1}: \bf k\mathfrak R_{0}\rightarrow A$ making the following diagram commutative
$$
\xymatrix{
  X\sqcup \lc R_{0}^{\prime}\rc \ar[dr]_{f_{1}} \ar@{^{(}->}[r]^{i_{1}}
  & \bf k\mathfrak R_{1} \ar@{.>}[d]^{\bar{f}_{1}}  \\
  & A }
$$

For $k\geq 0$, suppose that algebra homomorphisms $\bar{f}_{n}: \bfk\mathfrak R_{n}\rightarrow A$ for $0\leq n\leq k$ have been defined. We define a map $f_{k+1}$ by
\begin{equation}
 f_{k+1} : X \sqcup\lc\frak R'_{k}\rc\longrightarrow A,
\quad \left\{\begin{array}{ll} x\mapsto f(x),
&x\in X, \\
\lc x'_{k}\rc\mapsto Q(\bar{f}_{k}( x'_{k})),
& x_{k}\in \frak R'_{k}.
\end{array}\right.
\mlabel{eq:OM}
\end{equation}
Then by the universal property of $\bfk\frak R_{k+1}=\bfk M(X\sqcup \lc\frak R_{k}'\rc)$ as the free algebra on $X\sqcup\lc\frak R_{k}'\rc$, we obtain an algebra homomorphism $\bar{f}_{k+1}: \bf k\mathfrak R_{k+1}\rightarrow A$ making the following diagram
commutative
$$
\xymatrix{
  X\sqcup \lc R_{k}^{\prime}\rc \ar[dr]_{f_{k+1}} \ar@{^{(}->}[r]^{i_{k+1}}
  & \bf k\mathfrak R_{k+1} \ar@{.>}[d]^{\bar{f}_{k+1}}  \\
  & A }
$$

We next check the compatibility of $\{\bar{f}_{n}\}_{n\geq0}$ by a recursion on $n$, that is, the following diagram commutes
\begin{align*}
\xymatrix@R=0.5cm{
{\bf k }\mathfrak {R}_{n} \ar[dd]_{\bar{f}_{n}} \ar@{^{(}->}[dr]^{i_{n+1}}\\
                & {\bf k} \mathfrak {{R}}_{n+1} \ar[dl]^{\bar{f}_{n+1}}\\
  A }
\end{align*}
When $n=0$, for any $r\in\frak R_{0}$, we obtain $\bar{f}_{0}(r)=f(r)$ and we also have $\bar{f}_{1}(r)=f(r)=\bar{f}_{1}(i(r))$, as needed.

Assume that the compatibility holds for $0\leq n\leq k$ with $k\geq0$ and consider the case when $n=k+1$. By the induction hypothesis, we have
$$\bar{f}_{k+1}i_{k+1}=\bar{f}_{k}.$$
For $r\in\frak R_{k+1}=M(X\sqcup\lc\frak R'_{k}\rc)$, we need to consider the following two cases:

\noindent{\bf Case 1}. $r\in\frak R'_{k+1}$: Then $\bar{f}_{k+2}i_{k+2}(r)=\bar{f}_{k+2}(r)= f(r)=\bar{f}_{k+1}(r)$.

\noindent{\bf Case 2}. $r =\lc r_{1}\rc^{(n_{1})} \cdots \lc r_{m_{r}}\rc^{(n_{m_{r}})}\in\frak R''_{k+1}$ with $r_{i}\in\frak R'\setminus\lc\frak R'\rc, n_{i}\geq1$, where $1\leq i\leq m_{r}$:
Then we have
\begin{align*}
\bar{f}_{k+2}i_{k+2}(r)
&=\bar{f}_{k+2}i_{k+2}(\lc r_{1}\rc^{(n_{1})} \cdots \lc r_{m_{r}}\rc^{(n_{m_{r}})})\\
&=\bar{f}_{k+2}(\lc r_{1}\rc^{(n_{1})} \cdots \lc r_{m_{r}}\rc^{(n_{m_{r}})})\\
&=\bar{f}_{k+2}(\lc r_{1}\rc^{(n_{1})}) \cdots \bar{f}_{k+2}(\lc r_{m_{r}}\rc^{(n_{m_{r}})})\quad(\text{$\bar{f}_{k+2}$ is a morphism of algebras })\\
&=Q(\bar{f}_{k+1}(\lc r_{1}\rc^{(n_{1}-1)})\cdots Q(\bar{f}_{k+1}(\lc r_{m_{r}}\rc^{(n_{m_{r}}-1)}))
\quad(\text{by Eq.}(\mref{eq:OM}))\\
&=Q(\bar{f}_{k+1}i_{k+1}(\lc r_{1}\rc^{(n_{1}-1)}))\cdots Q(\bar{f}_{k+1}i_{k+1}(\lc r_{m_{r}}\rc^{(n_{m_{r}}-1)}))\\
&=Q(\bar{f}_{k}(\lc r_{1}\rc^{(n_{1}-1)}))\cdots Q(\bar{f}_{k}(\lc r_{m_{r}}\rc^{(n_{m_{r}}-1)}))
\quad(\text{by the induction hypothesis})\\
&=\bar{f}_{k+1}(r)\quad(\text{by Eq. }(\mref{eq:OM})).
\end{align*}

This completes the inductive proof of the compatibility of the algebra homomorphisms $f_n$ on the direct system $\bfk\frakR_n, n\geq 0$. Therefore, we can take direct limit and obtain
\begin{equation}
\bar{f}:=\dirlim\bar{f}_{n}:\bfk\frak R\rightarrow A,
\mlabel{eq:IF}
\end{equation}
which satisfies the following commutative diagram
$$
\xymatrix{
  X \ar[dr]_{f} \ar@{^{(}->}[r]^{i}
  & \bf k \mathfrak R \ar@{.>}[d]^{\bar{f}} \\
  & A}
$$

We further check that $\bar{f}:\bfk\frak R\rightarrow A$ is a Reynolds algebra homomorphism. From Eq.~(\mref{eq:ROP1}), we just need to verify the equation
\begin{equation}
\bar{f}(P(r))=Q(\bar{f}(r))\ \tforall r\in\frak R.
\mlabel{eq:Rmor}
\end{equation}
For this we consider the following two cases:

\noindent{\bf Case 1}. $r\in\frak R'$: Then applying Eqs.~(\mref{eq:R'}) and (\mref{eq:OM}), we obtain
$$\bar{f}(P(r))
=\bar{f}(\lc r\rc) =Q(\bar{f}(r)).
$$

\noindent{\bf Case 2}. $r =\lc r_{1}\rc^{(n_{1})}\cdots \lc r_{m_{r}}\rc^{(n_{m_{r}})}\in\frak R''$: Then $m_r\geq2,\ n_{1},\cdots, n_{m_{r}}\geq1$ and $r_1,\cdots, r_{m_{r}}\in\frak R'\setminus\lc\frak R'\rc$. We now verify Eq.~(\mref{eq:Rmor}) by induction on $n:=n_{1}+\cdots+n_{m_{r}}$. Then $n\geq m_r\geq2$. When $n=2$, we have $m_r=2,n_{1}=1,n_{2}=1$ and then $r =\lc r_{1}\rc\lc r_{2}\rc$. Thus
\begin{align*}
\bar{f}(P(r))
&=\bar{f}(P(\lc r_{1}\rc\lc r_{2}\rc))\\
&=\bar{f}\left(P(r_{1}\lc r_{2}\rc)+P(\lc r_{1}\rc r_{2})-r\right)
\quad(\text{by Eq. }(\mref{eq:R''}))\\
&=\bar{f}\left(\lc r_{1}\lc r_{2}\rc\rc +\lc \lc r_{1}\rc r_{2}\rc -r\right)
\quad(\text{by Eq. }(\mref{eq:R'}))\\
&=\bar{f}(\lc r_{1}\lc r_{2}\rc\rc) +\bar{f}(\lc \lc r_{1}\rc r_{2}\rc) -\bar{f}(\lc r_{1}\rc)\bar{f}(\lc r_{2}\rc)
\quad(\text{$\bar{f}$ is an algebra homomorphism})\\
&=Q(\bar{f}(r_{1}\lc r_{2}\rc)) +Q(\bar{f}(\lc r_{1}\rc r_{2})) -Q(\bar{f}(r_{1}))Q(\bar{f}(r_{2}))
\quad(\text{by Eq. }(\mref{eq:OM}))\\
&=Q(\bar{f}(r_{1})Q(\bar{f}(r_{2}))
+Q(Q(\bar{f}(r_{1})) \bar{f}(r_{2}))
-Q(\bar{f}(r_{1}))Q(\bar{f}(r_{2}))\\
&=Q(Q(\bar{f}(r_{1}))Q(\bar{f}(r_{2})))
\quad(\text{$Q$ is a Reynolds operator})\\
&=Q(\bar{f}(\lc r_{1}\rc)\bar{f}(\lc r_{2}\rc))
\quad(\text{by Eq. }(\mref{eq:OM}))\\
&=Q(\bar{f}(\lc r_{1}\rc\lc r_{2}\rc))
\quad(\text{$\bar{f}$ is an algebra homomorphism})\\
&=Q(\bar{f}(r)).
\end{align*}

Assume that Eq.~(\mref{eq:Rmor}) holds for $r\leq k$ where $k\geq2$ and consider $r =\lc r_{1}\rc^{(n_{1})}\cdots \lc r_{m_{r}}\rc^{(n_{m_{r}})}\in\frak R''$ with $n=n_{1}+\cdots+n_{m_{r}}=k+1$. Then
\begin{align*}
\bar{f}(P(r))
&=\bar{f}(P(\lc r_{1}\rc^{(n_{1})} \cdots \lc r_{m_{r}}\rc^{(n_{m_{r}})}))\\
&=\bar{f}\left(\frac{1}{m_{r}-1}\left(\sum^{m_r}_{i=1}P(r^{\ast}_{i})-r\right)\right)
\quad(\text{by Eq. }(\mref{eq:R''}))\\
&=\frac{1}{m_{r}-1}\left(\sum^{m_r}_{i=1}\bar{f}(P(r^{\ast}_{i}))-\bar{f}(r)\right)
\quad(\text{$\bar{f}$ is an algebra homomorphism})\\
&=\frac{1}{m_{r}-1}\left(\sum^{m_r}_{i=1}Q(\bar{f}(r^{\ast}_{i}))-\bar{f}(r)\right)
\quad(\text{by the induction hypothesis })\\
&=\frac{1}{m_{r}-1}\left(\sum^{m_r}_{i=1}Q(Q^{n_{1}}(\bar{f}(r_{1}))\cdots Q^{n_{i-1}}(\bar{f}(r_{i-1})) Q^{n_{i}-1}(\bar{f}(r_{i}))Q^{n_{i+1}}(\bar{f}(r_{i+1}))\cdots Q^{n_{m_r}}(\bar{f}(r_{m_r}))\right)\\
&~~~~~~~~-\frac{1}{m_{r}-1}Q^{n_{1}}(\bar{f}(r_{1}))\cdots Q^{n_{m_r}}(\bar{f}(r_{m_r}))
\quad(\text{$\bar{f}$ is an algebra homomorphism})\\
&=Q(Q^{n_{1}}(\bar{f}(r_{1}))\cdots Q^{n_{m_r}}(\bar{f}(r_{m_r})))
\quad(\text{$Q$ is a Reynolds operator})\\
&=Q(\bar{f}(\lc r_{1}\rc^{(n_{1})})\cdots \bar{f}(\lc r_{m_{r}}\rc^{(n_{m_{r}})}))
\quad(\text{$Q$ is a Reynolds operator})\\
&=Q(\bar{f}(\lc r_{1}\rc^{(n_{1})}\cdots\lc r_{m_{r}}\rc^{(n_{m_{r}})}))
\quad(\text{$\bar{f}$ is an algebra homomorphism})\\
&=Q(\bar{f}(r)).
\end{align*}
Therefore, $\bar{f}$ is a Reynolds algebra homomorphism.

Finally, we verify the uniqueness of $\bar{f}=\dirlim\bar{f}_{n}$ in the universal property.
Assume that there is another Reynolds algebra homomorphism $\bar{f'}: \bfk\mathfrak R\rightarrow A$ such that
$$\bar{f}'i=f =\bar{f}i.$$
Define two maps by:
$$ \bar{f}'_{0}:=\bar{f}'|_{\bfk\frak R_{0}}: \bfk\frak R_{0}\rightarrow A,\quad  \bar{f}'_{1}:=\bar{f}'|_{\bfk\frak R_{1}}: \bfk\frak R_{1}\rightarrow A.
$$
Then for all $x\in X$, we have
$$
\bar{f}'_{0}i_{0}(x)=f(x)=\bar{f}_{0}i_{0}(x),\quad
\bar{f}_{1}'P(x)=Q(f'(x))=Q(\bar{f}_{0}'(x)).
$$
The universal property of $\bfk\frak R$ gives $\bar{f}'_{0}=\bar{f}_{0}.$
Furthermore, define
$$\{\bar{f}'_{n}:=\bar{f}'|_{\bfk\frak R_{n}}:\bfk\frak R_{n}\to A\mid n\geq0\}.$$
From its construction and the fact that $\bar{f}'$ is a Reynolds algebra homomorphism, we obtain
\begin{align*}
\bar{f}'_{n+1}i_{n+1}(r_{n})&=f(r_{n})=\bar{f}_{n+1}i_{n+1}(r_{n}),\\
\bar{f}_{n+1}'P(r_{n})&=Q(f'(r_{n}))=Q(\bar{f}_{n}'(r_{n})) \tforall r_{n}\in\bfk\frak R_{n}.
\end{align*}
By our inductive construction of $\{\bar{f}_n\mid n\geq 0\}$, such $\bar{f}_n$ are unique. Thus for all $n\geq0$
$$\bar{f}'_{n}=\bar{f}_{n}.$$
Therefore $\bar{f}'=\bar{f}$. This proves the uniqueness of $\bar{f}$. The proof of Theorem~\mref{thm:main} is completed.
\end{proof}

\subsection{The proof of Lemma~\mref{lem:C}}
\mlabel{ss:lemma}

We now give the proof of Lemma~\mref{lem:C}, which is divided into the following three cases of the inclusions of $r$ and $s$ in $\frakR''$.
\begin{enumerate}
\item
$r$ is in $\frakR''$ and $s$ is in $\frakR'$;
\mlabel{it:cr}
\item
$s$ is in $\frakR''$ and $r$ is in $\frakR'$;
\mlabel{it:cs}
\item
both $r$ and $s$ are in $\frak R''$.
\mlabel{it:crs}
\end{enumerate}

\noindent
{\bf Case (\mref{it:cr}). $r\in \frakR''$ and $s\in\frak R'$:} Then $r=\lc r_{1}\rc^{(n_{1})} \cdots \lc r_{m_{r}}\rc^{(n_{m_{r}})}\in\frak R''$ with $r_{i}\in\frak R'\setminus\lc\frak R'\rc, n_{i}\geq1$, where $1\leq i\leq m_{r}, m_r\geq 2$.
Since $s\in\frak R'$, we have $P(s)=\lc s\rc$ by Eq.~(\mref{eq:R'}). Therefore
\begin{align*}
m_{r}P\bigg(r P(s)\bigg)+r P(s)-P(r s)
&=m_{r}P(\lc r_{1}\rc^{(n_{1})} \cdots \lc r_{m_{r}}\rc^{(n_{m_{r}})}\rc\lc s\rc)+r \lc s\rc-P(r s)\\
&=\left(\sum_{i=1}^{m_{r}}P(r^{\ast}_{i}\lc s\rc)
+P(r s)-r \lc s\rc\right)+r \lc s\rc-P(r s)
\quad(\text{by Eq. }(\mref{eq:R''}))\\
&=\sum_{i=1}^{m_{r}}P\bigg(r^{\ast}_{i}P(s)\bigg).
\end{align*}

\noindent
{\bf Case (\mref{it:cs}). $r\in\frak R'$ and $s\in \frakR''$: }
The proof is similar to that of Case (\mref{it:cr}).

\noindent
{\bf Case (\mref{it:crs}). $r$ and $s$ are both in $\frakR''$:}
Then $r =\lc r_{1}\rc^{(n_{1})} \cdots \lc r_{m_{r}}\rc^{(n_{m_{r}})}$ and $s=\lc s_{1}\rc^{(l_{1})} \cdots \lc s_{m_{s}}\rc^{(l_{m_{s}})}$ with $r_{i}, s_{j}\in\frak R'\setminus\lc\frak R'\rc, n_{i}\geq1,l_{j}\geq1$, where $1\leq i\leq m_{r},1\leq j\leq m_{s}$ and $m_r, m_s\geq 2$.

We first verify Eq.~(\mref{eq:SE}) by induction on $q:=\sum\limits^{m_{r}}_{i=1}n_{i}$. Then $q\geq m_{r}\geq 2$. When $q=2$, we obtain $m_{r}=2$ and $n_{1}=n_{2}=1$. Thus
\begin{align*}
m_{s}P\bigg(P(r)s\bigg)
&=m_{s}P\bigg(P(\lc r_{1}\rc\lc r_{2}\rc)s\bigg)\\
&=m_{s}P(P(r_{1}\lc r_{2}\rc)s)+m_{s}P(P(\lc r_{1} \rc r_{2})s)-m_{s}P(\lc r_{1} \rc\lc r_{2}\rc s)
\quad(\text{by Eq. }(\mref{eq:R''}))\\
&=m_{s}P(\lc r_{1}\lc r_{2}\rc\rc s)+m_{s}P(\lc\lc r_{1} \rc r_{2}\rc s)-m_{s}P(\lc r_{1} \rc\lc r_{2}\rc s)\quad(\text{by Eq. }(\mref{eq:R'}))\\
&=P(r_{1}\lc r_{2}\rc s)+\sum_{j=1}^{m_{s}}P(\lc r_{1}\lc r_{2}\rc\rc s^{\ast}_{j})-\lc r_{1}\lc r_{2}\rc\rc s
+P(\lc r_{1} \rc r_{2}s)+\sum_{j=1}^{m_{s}}P(\lc\lc r_{1} \rc r_{2}\rc s^{\ast}_{j})-\lc\lc r_{1} \rc r_{2}\rc s\\
&~~~~~~~~-\frac{m_{s}}{m_{s}+1}P(r_{1}\lc r_{2}\rc s)
-\frac{m_{s}}{m_{s}+1}P(\lc r_{1}\rc r_{2}s)
-\frac{m_{s}}{m_{s}+1}\sum^{m_{s}}_{j=1}P(\lc r_{1}\rc\lc r_{2}\rc s^{\ast}_{j})
+\frac{m_{s}}{m_{s}+1}\lc r_{1}\rc\lc r_{2}\rc s\\
&\hspace{2cm}(\text{by Eq. }(\mref{eq:R''}))\\
&=\lc r_{1}\lc r_{2}\rc s\rc+\sum_{j=1}^{m_{s}}P(\lc r_{1}\lc r_{2}\rc\rc s^{\ast}_{j})-\lc r_{1}\lc r_{2}\rc\rc s
+\lc\lc r_{1} \rc r_{2}s\rc+\sum_{j=1}^{m_{s}}P(\lc\lc r_{1} \rc r_{2}\rc s^{\ast}_{j})-\lc\lc r_{1} \rc r_{2}\rc s\\
&~~~~~~~~-\frac{m_{s}}{m_{s}+1}\lc r_{1}\lc r_{2}\rc s\rc
-\frac{m_{s}}{m_{s}+1}\lc\lc r_{1}\rc r_{2}s\rc
-\frac{m_{s}}{m_{s}+1}\sum^{m_{s}}_{j=1}P(\lc r_{1}\rc\lc r_{2}\rc s^{\ast}_{j})
+\frac{m_{s}}{m_{s}+1}\lc r_{1}\rc\lc r_{2}\rc s\\
&\hspace{2cm}(\text{by Eq. }(\mref{eq:R'}))\\
&=\sum_{j=1}^{m_{s}}P(\lc r_{1}\lc r_{2}\rc\rc s^{\ast}_{j})
+\sum_{j=1}^{m_{s}}P(\lc\lc r_{1} \rc r_{2}\rc s^{\ast}_{j})
-\frac{m_{s}}{m_{s}+1}\sum^{m_{s}}_{j=1}P(\lc r_{1}\rc\lc r_{2}\rc s^{\ast}_{j})
+\frac{m_{s}}{m_{s}+1}\lc r_{1}\rc\lc r_{2}\rc s\\
&~~~~~~~~+\frac{1}{m_{s}+1}\lc r_{1}\lc r_{2}\rc s\rc
+\frac{1}{m_{s}+1}\lc\lc r_{1}\rc r_{2}s\rc
-\lc r_{1}\lc r_{2}\rc\rc s-\lc\lc r_{1} \rc r_{2}\rc s
\end{align*}
and
\begin{align*}
&\sum_{j=1}^{m_{s}}P\bigg(P(r)s^{\ast}_{j}\bigg)+P(r s)-P(r)s\\
&=\sum_{j=1}^{m_{s}}P(\lc r_{1}\lc r_{2}\rc\rc s^{\ast}_{j})+\sum_{j=1}^{m_{s}}P(\lc\lc r_{1}\rc r_{2}\rc s^{\ast}_{j})-\sum_{j=1}^{m_{s}}P(\lc r_{1}\rc\lc r_{2}\rc s^{\ast}_{j})
+\frac{1}{m_{s}+1}P(r_{1}\lc r_{2}\rc s)
+\frac{1}{m_{s}+1}P(\lc r_{1}\rc r_{2}s)\\
&~~~~~~~~+\frac{1}{m_{s}+1}\sum^{m_{s}}_{j=1}P(\lc r_{1}\rc\lc r_{2}\rc s^{\ast}_{j})
-\frac{1}{m_{s}+1}\lc r_{1}\rc\lc r_{2}\rc s
-\lc\lc r_{1}\rc r_{2}\rc s
-\lc r_{1}\lc r_{2}\rc\rc s
+\lc r_{1}\rc \lc r_{2}\rc s
\ \ (\text{by Eq. }(\mref{eq:R''}))\\
&=\sum_{j=1}^{m_{s}}P(\lc r_{1}\lc r_{2}\rc\rc s^{\ast}_{j})+\sum_{j=1}^{m_{s}}P(\lc\lc r_{1}\rc r_{2}\rc s^{\ast}_{j})-\sum_{j=1}^{m_{s}}P(\lc r_{1}\rc\lc r_{2}\rc s^{\ast}_{j})
+\frac{1}{m_{s}+1}\lc r_{1}\lc r_{2}\rc s\rc
+\frac{1}{m_{s}+1}\lc\lc r_{1}\rc r_{2}s\rc\\
&~~~~~~~~+\frac{1}{m_{s}+1}\sum^{m_{s}}_{j=1}P(\lc r_{1}\rc\lc r_{2}\rc s^{\ast}_{j})
-\frac{1}{m_{s}+1}\lc r_{1}\rc\lc r_{2}\rc s-\lc\lc r_{1}\rc r_{2}\rc s
-\lc r_{1}\lc r_{2}\rc\rc s
+\lc r_{1}\rc \lc r_{2}\rc s
\ \ (\text{by Eq. }(\mref{eq:R'}))\\
&=\sum_{j=1}^{m_{s}}P(\lc r_{1}\lc r_{2}\rc\rc s^{\ast}_{j})
+\sum_{j=1}^{m_{s}}P(\lc\lc r_{1}\rc r_{2}\rc s^{\ast}_{j})
-\frac{m_{s}}{m_{s}+1}\sum^{m_{s}}_{j=1}P(\lc r_{1}\rc\lc r_{2}\rc s^{\ast}_{j})
+\frac{m_{s}}{m_{s}+1}\lc r_{1}\rc\lc r_{2}\rc s\\
&~~~~~~~~+\frac{1}{m_{s}+1}\lc r_{1}\lc r_{2}\rc s\rc
+\frac{1}{m_{s}+1}\lc\lc r_{1}\rc r_{2}s\rc
-\lc\lc r_{1}\rc r_{2}\rc s
-\lc r_{1}\lc r_{2}\rc\rc s.
\end{align*}
Hence Eq.~(\mref{eq:SE}) holds. Assume that Eq.~(\mref{eq:SE}) holds for $q\leq p$ with $p\geq m_{r}$ and consider $q=p+1$. By Eq.~(\mref{eq:R''}), we have
\begin{align}
P(r)=\frac{1}{m_{r}-1}\left(\sum^{m_{r}}_{i=1}P(r^{\ast}_{i})-r \right).
\mlabel{eq:add}
\end{align}
Thus
\begin{align*}
&m_{s}P\bigg(P(r)s\bigg)\\
&=\frac{m_{s}}{m_{r}-1}\sum^{m_{r}}_{i=1}P\bigg(P(r^{\ast}_{i})s\bigg) -\frac{m_{s}}{m_{r}-1}P(rs)\quad(\text{by Eq.~(\mref{eq:add})})\\
&=\frac{1}{m_{r}-1}\sum^{m_{r}}_{i=1}\sum_{j=1}^{m_{s}}P\bigg(P(r^{\ast}_{i})s^{\ast}_{j}\bigg)
+\frac{1}{m_{r}-1}\sum^{m_{r}}_{i=1}P(r^{\ast}_{i}s)
-\frac{1}{m_{r}-1}\sum^{m_{r}}_{i=1}P(r^{\ast}_{i})s
-\frac{m_{s}}{m_{r}-1}P(rs)\\
&\hspace{2cm}(\text{by the induction hypothesis})\\
&=\frac{1}{m_{r}-1}\sum^{m_{r}}_{i=1}\sum_{j=1}^{m_{s}}P\bigg(P(r^{\ast}_{i})s^{\ast}_{j}\bigg)
+\frac{1}{m_{r}-1}\sum^{m_{r}}_{i=1}P(r^{\ast}_{i}s)
-\frac{1}{m_{r}-1}\sum^{m_{r}}_{i=1}P(r^{\ast}_{i})s\\
&~~~~~~~~-\frac{m_{s}}{m_{r}-1}\frac{1}{m_{r}+m_{s}-1}\sum^{m_{r}}_{i=1}P(r^{\ast}_{i}s)
-\frac{m_{s}}{m_{r}-1}\frac{1}{m_{r}+m_{s}-1}\sum^{m_{s}}_{j=1}P(rs^{\ast}_{j})
+\frac{m_{s}}{m_{r}-1}\frac{1}{m_{r}+m_{s}-1}rs\\
&\hspace{2cm}(\text{by Eq. }(\mref{eq:R''}))\\
&=\frac{1}{m_{r}-1}\sum^{m_{r}}_{i=1}\sum_{j=1}^{m_{s}}P\bigg(P(r^{\ast}_{i})s^{\ast}_{j}\bigg)
+\frac{1}{m_{r}+m_{s}-1}\sum^{m_{r}}_{i=1}P(r^{\ast}_{i}s)
-\frac{1}{m_{r}-1}\sum^{m_{r}}_{i=1}P(r^{\ast}_{i})s\\
&~~~~~~~~-\frac{m_{s}}{m_{r}-1}\frac{1}{m_{r}+m_{s}-1}\sum^{m_{s}}_{j=1}P(rs^{\ast}_{j})
+\frac{m_{s}}{m_{r}-1}\frac{1}{m_{r}+m_{s}-1}rs
\end{align*}
and
\begin{align*}
&\sum_{j=1}^{m_{s}}P\bigg(P(r)s^{\ast}_{j}\bigg)+P(r s)-P(r)s\\
&=\frac{1}{m_{r}-1}\sum^{m_{r}}_{i=1}\sum_{j=1}^{m_{s}}
P\bigg(P(r^{\ast}_{i})s^{\ast}_{j}\bigg)
-\frac{1}{m_{r}-1}\sum_{j=1}^{m_{s}}P(rs^{\ast}_{j})
+P(r s)
-\frac{1}{m_{r}-1}\sum^{m_{r}}_{i=1}P(r^{\ast}_{i})s
+\frac{1}{m_{r}-1}rs\\
&\hspace{2cm}(\text{by Eq.~(\mref{eq:add})})\\
&=\frac{1}{m_{r}-1}\sum^{m_{r}}_{i=1}\sum_{j=1}^{m_{s}}
P\bigg(P(r^{\ast}_{i})s^{\ast}_{j}\bigg)
-\frac{1}{m_{r}-1}\sum_{j=1}^{m_{s}}P(rs^{\ast}_{j})
-\frac{1}{m_{r}-1}\sum^{m_{r}}_{i=1}P(r^{\ast}_{i})s
+\frac{1}{m_{r}-1}rs\\
&~~~~~~~~+\frac{1}{m_{r}+m_{s}-1}\sum^{m_{r}}_{i=1}P(r^{\ast}_{i}s)
+\frac{1}{m_{r}+m_{s}-1}\sum^{m_{s}}_{j=1}P(rs^{\ast}_{j})
-\frac{1}{m_{r}+m_{s}-1}rs\quad(\text{by Eq. }(\mref{eq:R''}))\\
&=\frac{1}{m_{r}-1}\sum^{m_{r}}_{i=1}\sum_{j=1}^{m_{s}}
P\bigg(P(r^{\ast}_{i})s^{\ast}_{j}\bigg)
-\frac{m_{s}}{m_{r}-1}\frac{1}{m_{r}+m_{s}-1}\sum_{j=1}^{m_{s}}P(rs^{\ast}_{j})
-\frac{1}{m_{r}-1}\sum^{m_{r}}_{i=1}P(r^{\ast}_{i})s\\
&~~~~~~~~+\frac{1}{m_{r}+m_{s}-1}\sum^{m_{r}}_{i=1}P(r^{\ast}_{i}s)
+\frac{m_{s}}{m_{r}-1}\frac{1}{m_{r}+m_{s}-1}rs.
\end{align*}
Therefore
$$
m_{s}P\bigg(P(r)s\bigg)=\sum_{j=1}^{m_{s}}P\bigg(P(r)s^{\ast}_{j}\bigg)+P(r s)-P(r)s.
$$
This completes the inductive proof of Eq.~\eqref{eq:SE}. A similar argument proves Eq.~(\mref{eq:SE'}).

Now the proof of Lemma~\mref{lem:C} is completed.

\smallskip

\noindent {\bf Acknowledgements}: This work is supported by the National Natural Science Foundation of China (Grant Nos. 11771190 and 11861051) and the Natural Science Foundation of Ningxia (Grant No. 2018AAC03063). The authors thank William Sit for helpful discussions.

\end{document}